\newtheorem{thm}{Theorem}
\newtheorem{cor}{Corollary}
\newtheorem{defn}{Definition}
\newtheorem{lem}{Lemma}
\newtheorem{prop}{Proposition}
\def\QED{\mbox{\rule[0pt]{1.3ex}{1.3ex}}} % for a filled box
\newenvironment{proof-of}[1]{{\it Proof of #1:\,}}{\hfill \QED \par}
\newtheorem{rem}{Remark}
\newcommand{\R}{{\mathbb R}}
\newcommand{\Rnn}{{\mathbb R}_{\ge 0}}
\newcommand{\Rp}{{\mathbb R}_{> 0}}
\newcommand{\cA}{{\mathcal A}}
\newcommand{\cB}{{\mathcal B}}
\newcommand{\cC}{{\mathcal C}}
\newcommand{\Hinf}{{\mathcal{H}_\infty}}
\newcommand{\cG}{{\mathcal G}}
\newcommand{\cI}{{\mathcal I}}
\newcommand{\cK}{{\mathcal K}}
\newcommand{\cL}{{\mathcal L}}
\newcommand{\cM}{{\mathcal M}}
\newcommand{\cN}{{\mathcal N}}
\newcommand{\cP}{{\mathcal P}}
\newcommand{\cU}{{\mathcal U}}
\newcommand{\cY}{{\mathcal Y}}
\newcommand{\diag}[1]{\textrm{diag}\{#1\}}
\renewcommand{\Re}{\mathrm{Re}}
\newcommand{\bfone}{\mathbf{1}}
\newcommand{\yang}[1]{{#1}}
\newcommand{\transpose}{{\mathsf T}}
\newcounter{tempEquationCounter}
\newcounter{thisEquationNumber}
\title{On the Existence of Block-Diagonal Solutions to Lyapunov and \texorpdfstring{$\Hinf$}{H infinity} Riccati Inequalities\thanks{This is an extended technical report. The main results have been accepted for publication as a technical note in the IEEE Transactions on Automatic Control. A. Sootla and A. Papachristodoulou are supported by the EPSRC Grant EP/M002454/1. Y. Zheng is supported in part by the Clarendon Scholarship, and in part by the Jason Hu Scholarship.}}
\author{Aivar Sootla, Yang Zheng, and Antonis Papachristodoulou%\thanks{}
	\thanks{The authors are with the Department of Engineering Science, University of Oxford, Parks Road, Oxford, OX1 3PJ, U.K. e-mails: \{aivar.sootla, yang.zheng, antonis\}@eng.ox.ac.uk. }}
\begin{document}
	\maketitle
	
\begin{abstract}
	In this paper, we describe sufficient conditions when block-diagonal solutions to Lyapunov and $\Hinf$ Riccati inequalities exist. In order to derive our results, we define a new type of comparison systems, which are positive and are computed using the state-space matrices of the original (possibly nonpositive) systems. Computing the comparison system involves only the calculation of $\Hinf$ norms of its subsystems. We show that the stability of this comparison system implies the existence of block-diagonal solutions to Lyapunov and Riccati inequalities. Furthermore, our proof is constructive and the overall framework allows the computation of block-diagonal solutions to these matrix inequalities with linear algebra and linear programming. Numerical examples illustrate our theoretical results.
\end{abstract}

\section{Introduction}
Block-diagonal solutions to Lyapunov and Riccati inequalities are preferable in many control theoretic applications, e.g., structured model reduction (cf.~\cite{SandStrucRed}), decentralised control and analysis (cf.~\cite{siljak2011decentralized}). A class of systems that is known to admit block-diagonal solutions to these matrix inequalities is the class of positive systems (cf.~\cite{berman1994nonnegative, rantzer2015ejc}), which is one of the reasons why generalisations of this class of systems is an active area of research~\cite{ebihara2017h2, cacace2014stable}.

In this paper, we focus on a generalisation of positive systems based on diagonally dominant matrices since it is known that for this class of systems separable Lyapunov functions exist~\cite{hershkowitz1985lyapunov} and can be computed using linear programming~\cite{sootla2016existence}. Recently, the diagonal dominant approach was applied to block-partitioned matrices, which lead to conditions for the existence of block-diagonal solutions to Lyapunov inequalities~\cite{sootla2017blocksdd}. In this paper, we generalise the existence results from~\cite{sootla2017blocksdd} and derive conditions on the existence of block-diagonal solutions to $\Hinf$ Riccati inequalities. The main idea of the approach is to partition the state-space and compute a \emph{comparison system}, which is positive and its dimension is equal to the number of clusters in the state partition. Hence its dimension can be substantially smaller than the dimension of the original system. The computation of the comparison system reduces to the computation of $\Hinf$ norms of the systems, whose size is determined by the size of the individual clusters. We show that the stability of the comparison system implies stability of the original system (the converse is generally false), and guarantees the existence of block-diagonal solutions to Lyapunov and Riccati inequalities. The proof of this result is constructive and computing these solutions can be performed using linear algebra and linear programming methods.

Even though we took inspiration from the linear algebra literature, in our previous work~\cite{sootla2017blocksdd} we reconstructed and generalised some existing control theory results, in particular, the stability criteria in~\cite{cook1974stability}. Therefore, our comparison system approach is tightly related to previous work on comparison systems reported in~\cite{araki1975application, vidyasagar1981input} and more recently in~\cite{dashkovskiy2010small}. However, the computation of comparison systems in~\cite{araki1975application,dashkovskiy2010small} requires constructing Lyapunov/storage functions for individual systems in the network, and the overall procedure is typically non-convex. Our approach, on the other hand, is constructive and provides an algorithm to compute comparison systems \emph{without} Lyapunov function computation. Furthermore, in the context of linear systems, the existence and construction of block-diagonal solutions to Riccati and Lyapunov inequalities are not discussed before.

The rest of this paper is organised as follows. In Section~\ref{s:prel}, we cover some preliminaries of control theoretic tools, positive systems theory and define a new type of comparison systems. In Section~\ref{s:prop-comp-sys}, we derive sufficient conditions for the existence of block-diagonal solutions to $\Hinf$ Riccati inequalities. We illustrate our theoretical results in Section~\ref{s:exam}. Additional minor technical results and numerical simulations are available in Appendix.

\emph{Notation.} The minimal and maximal singular values of a matrix $A\in \R^{m\times n}$ are denoted by $\underline{\sigma}(A)$ and $\overline{\sigma}(A)$, respectively. For a matrix $A\in \R^{m\times n}$, $A^\transpose$ denotes its transpose.  The $\Hinf$ norm of an asymptotically stable transfer function $G(s)$ is computed as $\|G\|_\Hinf =\max_{w\in \R}\|G(\imath \omega)\|_2$, where $\imath$ is the imaginary unit and $\|A\|_2 = \overline{\sigma}(A)$. A positive semidefinite (resp., positive definite) matrix $X$ is denoted by $X\succeq 0$ (resp., $X\succ 0$). We denote the matrices with nonnegative (resp., positive) entries as $A \ge 0$ (resp., $A >0$). The nonnegative (resp., positive) orthant, \emph{i.e.}, the set of all vectors $x\ge0$ (resp., $x>0$) in $\R^n$, is denoted by $\Rnn^n$ (resp., $\Rp^n$). For matrices $X_{i j}, j=1, \ldots, n$ with compatible dimension, we use $X_{i, -i}$ to denote
\begin{gather}\label{xi-i}
		X_{i, -i} =\begin{bmatrix}
		X_{i, 1} & \cdots & X_{i, i-1} & X_{i, i+1} & \cdots & X_{i, n}
	\end{bmatrix}.
\end{gather}
Finally, a block-diagonal matrix with matrices $A_i, i = 1, \ldots, n,$ on its diagonal is denoted by $\diag{A_1, \dots, A_n}$, \emph{i.e.},
\[
    \diag{A_1, \dots, A_n} = \begin{bmatrix} A_1 & & \\ & \ddots & \\ & & A_n \end{bmatrix}.
\]

\section{Preliminaries}~\label{s:prel}
In this section, we present some preliminaries on positive systems and introduce a new comparison system that is positive by definition.
\subsection{Control Theoretic and Positive Systems Tools}
In this paper, we study linear time-invariant systems
\begin{gather}
\label{eq:system-full}
\begin{aligned}
\dot{x}(t)&=A x(t) + B u(t),\\
y(t) &= C x(t) + D u(t),
\end{aligned}
\end{gather}
where $A \in \R^{N \times N}$, $B \in \R^{N \times N_i}$, $C \in \R^{N_o \times N}$, and $D\in \R^{N_o\times N_i}$. System~\eqref{eq:system-full} is asymptotically stable if and only if $A$ is a Hurwitz matrix, \emph{i.e.}, all its eigenvalues have negative real parts~\cite{ZDG} or equivalently there exists a positive definite matrix $P\succ 0$ such that
\begin{gather}\label{eq:lyap_lmi}
P A + A^\transpose P\prec 0.
\end{gather}
Using the linear matrix inequality (LMI)~\eqref{eq:lyap_lmi} (usually called Lyapunov inequality), one can define a Lyapunov function of the form $V(x)=x(t)^\transpose P x(t)$ for system~\eqref{eq:system-full} with $u(t) = 0$. In the context of input-output behaviour, \emph{dissipativity} is considered as a typical analysis tool and in particular $\Hinf$ analysis is enabled by the \emph{Bounded Real Lemma}~\cite{ZDG}.

\begin{prop}\label{prop:brl}
	Consider a system~\eqref{eq:system-full} where  $A$ is Hurwitz. We have
\begin{enumerate}[(a)]
  \item $\|C (s I - A)^{-1} B + D\|_\Hinf < \delta$ if and only if $\overline{\sigma}(D) < \delta$ and there exists $P \succ 0$ such that the following Riccati inequality holds
	\begin{equation}\label{ric-hinf}
	P A + A^\transpose P  + C^\transpose C  - (P B +C^\transpose D) (D^\transpose D  -\delta^{2} I)^{-1} (D^\transpose C  + B^\transpose P)  \prec  0.
	\end{equation}
  \item if $(C,A)$ is observable, then $ \|C (s I - A)^{-1} B+D\|_\Hinf < \delta$ implies that there exists $P\succ 0$ such that~\eqref{ric-hinf} holds with equality instead of inequality.
\end{enumerate}
\end{prop}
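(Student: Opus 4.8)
The plan is to prove the Bounded Real Lemma (Proposition~\ref{prop:brl}) by the standard route that connects the $\Hinf$-norm bound to a quadratic storage/supply-rate dissipation inequality, and then to a Riccati inequality via a completion-of-squares argument. Concretely, the $\Hinf$ bound $\|C(sI-A)^{-1}B+D\|_\Hinf<\delta$ is, by Parseval's identity, equivalent to the existence of some $\epsilon>0$ making the $\cL_2$-dissipation inequality $\int_0^\infty \bigl(\delta^2\|u\|_2^2-\|y\|_2^2\bigr)\,dt \ge \epsilon \int_0^\infty \|u\|_2^2\,dt$ hold for all $u\in\cL_2$ with $x(0)=0$. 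The first step is therefore to translate this frequency-domain statement into a time-domain statement; the necessity of $\maxsvd(D)<\delta$ is immediate here since $D=\lim_{\omega\to\infty} (C(\imath\omega I-A)^{-1}B+D)$ and the $\Hinf$ norm dominates this limit.

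\textbf{Sufficiency of the Riccati inequality.} Suppose $P\succ 0$ solves~\eqref{ric-hinf} and $\maxsvd(D)<\delta$, so $D^\transpose D-\delta^2 I\prec 0$ is invertible. I would consider $V(x)=x^\transpose P x$ along trajectories of~\eqref{eq:system-full} and compute $\dot V + \|y\|_2^2 - \delta^2\|u\|_2^2$. Writing this out, it equals
\[
\begin{bmatrix} x \\ u \end{bmatrix}^\transpose
\begin{bmatrix} PA+A^\transpose P + C^\transpose C & PB + C^\transpose D \\ B^\transpose P + D^\transpose C & D^\transpose D - \delta^2 I \end{bmatrix}
\begin{bmatrix} x \\ u \end{bmatrix}.
\]
Since the $(2,2)$ block is negative definite, a Schur-complement argument shows this $2\times2$ block matrix is negative definite precisely when~\eqref{ric-hinf} holds; hence $\dot V \le \delta^2\|u\|_2^2 - \|y\|_2^2 - \eta(\|x\|_2^2+\|u\|_2^2)$ for some $\eta>0$. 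Taking $u=0$ recovers that $A$ is Hurwitz (consistent with the hypothesis); integrating from $0$ to $\infty$ with $x(0)=0$ and $V(x(\infty))=0$ (which follows because $x\in\cL_2\cap\cL_\infty$) yields the strict $\cL_2$-gain bound, hence $\|C(sI-A)^{-1}B+D\|_\Hinf<\delta$.

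\textbf{Necessity of the Riccati inequality (part (a)) and the equality case (part (b)).} This is the harder direction. Here I would invoke the standard Hamiltonian-matrix / Riccati-equation theory: when $\|C(sI-A)^{-1}B+D\|_\Hinf<\delta$ and $\maxsvd(D)<\delta$, the associated Hamiltonian matrix has no eigenvalues on the imaginary axis, so the algebraic Riccati \emph{equation} (i.e.~\eqref{ric-hinf} with equality) admits a stabilizing solution $P$; observability of $(C,A)$ is what upgrades this $P$ to be positive \emph{definite} rather than merely semidefinite, giving part (b). For part (a), one then perturbs: replacing $C$ by $\begin{bmatrix} C^\transpose & \mu I\end{bmatrix}^\transpose$ for small $\mu>0$ makes the pair observable and keeps the $\Hinf$ norm below $\delta$ by continuity, so the perturbed equation has a solution $P_\mu\succ 0$; this $P_\mu$ then satisfies the original \emph{inequality}~\eqref{ric-hinf} strictly. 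The main obstacle is precisely this existence-of-a-stabilizing-solution step for the Riccati equation: a fully self-contained proof requires the eigenvalue/invariant-subspace analysis of the Hamiltonian matrix (non-existence of imaginary-axis eigenvalues under the strict norm bound, and existence of a $\delta$-dimensional stable graph subspace). Since the statement is cited from~\cite{ZDG} as a known result, I would present the sufficiency direction in full and sketch the necessity direction by reduction to the standard $\Hinf$ Riccati theory, citing the Hamiltonian-matrix arguments rather than reproducing them.
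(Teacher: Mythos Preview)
The paper does not prove Proposition~\ref{prop:brl} at all; it simply states the result and refers the reader to Corollary~13.24 in~\cite{ZDG} for a detailed proof. Your proposal is a correct outline of the standard Bounded Real Lemma argument (dissipation inequality plus Schur complement for sufficiency; Hamiltonian-matrix/stabilising-Riccati theory with an observability perturbation for necessity), and in fact you already anticipated this, noting that the result is cited from~\cite{ZDG}. So your write-up goes well beyond what the paper itself provides, and there is nothing further to compare.
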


We refer the interested reader to Corollary 13.24 in~\cite{ZDG} for a detailed proof. Note that the converse to the point b) holds only with additional spectral constraints on the solution $P$ and the system matrices $A$, $B$, $C$, $D$.
As the reader may notice both analysis methods rely on LMIs
with a generally dense positive definite matrix $P \succ 0$. In some cases, analysis can be significantly simplified using vector inequalities, which happens in the case of \emph{positive systems}. A system is called (internally) positive, if for any nonnegative control signal $u(t)$, and any nonnegative initial condition $x(0) = x_0$, the state $x(t)$ and the output $y(t)$ remain nonnegative. In order to avoid confusion, we will use a different notation for positive systems, namely:
\begin{gather}
\begin{aligned}
\dot \xi  &= F \xi + G \upsilon,\\
\nu &= H \xi +J \upsilon,
\end{aligned} \label{eq:system-comparison}
\end{gather}
where $F \in \R^{n \times n}$, $G \in \R^{n \times n_i}$, $H \in \R^{n_o \times n}$, and $J \in \R^{n_o \times n_i}$. Internally positive systems are fully characterised by conditions on the matrices $F$, $G$, $H$ and $J$: System~\eqref{eq:system-comparison} is internally positive
if and only if the matrices $G$, $H$, $J$ are nonnegative (all their entries $G_{i l}$, $H_{k j}$ , $J_{k l}$ are nonnegative) and \emph{the matrix $F$ is Metzler} (all its off-diagonal elements $F_{i j}$ for $i\ne j$ are nonnegative)~\cite{kaczorek2001externally}. In terms of stability and $\Hinf$ analyses, two well-known results, which can be found in~\cite{rantzer2015ejc, rantzer2016kalman, fan1958topological, varga1976recurring}, showcase the simplification.
\begin{prop}\label{prop:pos-stab}
	Consider a Metzler matrix $F$. Then the following statements are equivalent:
\vspace{-1mm}
    \begin{enumerate}[(a)]
	\item  $F$ is Hurwitz;
    \item There exists $d \in \Rp^n$ such that $-F d \in \Rp^n$;
	\item There exists $e\in \Rp^n$ such that $- F^\transpose e  \in \Rp^n$;
	\item There exists a diagonal $P\succ 0$ such that $P F + F^\transpose P \prec 0$.
    \end{enumerate}
\end{prop}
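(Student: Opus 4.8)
The plan is to prove the cyclic chain of implications $(a)\Rightarrow(b)\Rightarrow(d)\Rightarrow(a)$ together with $(b)\Leftrightarrow(c)$, exploiting the Metzler structure throughout. The only genuinely "positive-systems" ingredient is $(a)\Rightarrow(b)$ (and its transpose $(a)\Rightarrow(c)$), which encodes the existence of a strictly positive vector mapped into the positive orthant by $-F$; everything else is either a standard Lyapunov argument specialised to diagonal $P$, or an elementary manipulation.

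For $(a)\Rightarrow(b)$: since $F$ is Metzler, for sufficiently large $\tau>0$ the matrix $M:=F+\tau I$ is nonnegative, and $F$ Hurwitz is equivalent to the spectral radius condition $\rho(M)<\tau$. Hence $-F=\tau I-M$ is a nonsingular $M$-matrix, and I would invoke the classical characterisation of nonsingular $M$-matrices (e.g.\ from~\cite{berman1994nonnegative}): $(\tau I-M)^{-1}\ge 0$ exists, so taking any $w\in\Rp^n$ (say $w=\mathbf 1$) and setting $d=(-F)^{-1}w$ gives $d\ge 0$ with $-Fd=w\in\Rp^n$; strict positivity of $d$ follows because $d=(-F)^{-1}w=\tau^{-1}w+\tau^{-1}M d$ and $Md\ge 0$, so $d\ge\tau^{-1}w>0$. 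Applying the same argument to $F^\transpose$ (also Metzler and Hurwitz, since eigenvalues are preserved under transposition) yields $(a)\Rightarrow(c)$, and by symmetry $(b)$ and $(c)$ are each equivalent to $(a)$, hence to each other. Alternatively, one can get $(b)\Leftrightarrow(c)$ directly: given $d$ from $(b)$, the matrix $DFD^{-1}$ with $D=\diag{d_1,\dots,d_n}$ has all row sums of $-DFD^{-1}$ positive while remaining Metzler, so it is strictly row-diagonally-dominant with negative diagonal, which makes its transpose amenable to the same trick—but the cleanest route is the $M$-matrix symmetry above.

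For $(b)\Rightarrow(d)$: let $d$ be as in $(b)$, $e$ as in $(c)$, and set $P=\diag{e_1/d_1,\dots,e_n/d_n}\succ 0$. I would show $PF+F^\transpose P\prec 0$ by verifying it is itself (the negative of) a diagonally dominant matrix: with $q:=PF+F^\transpose P$, each off-diagonal entry $q_{ij}=(e_i/d_i)F_{ij}+(e_j/d_j)F_{ji}$ is nonnegative (both terms are, since $F$ is Metzler and $e,d>0$)—wait, that makes $q$ Metzler, not obviously negative definite, so instead I would use the standard weighted test: a Metzler matrix $q$ with $q\mathbf 1<0$ (or $q^\transpose\mathbf 1<0$) after a positive diagonal scaling is Hurwitz and, when symmetric, negative definite. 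Concretely, consider $V(x)=\sum_i (e_i/d_i)\,?$—the clean statement is: pick $P=\diag{p_i}$ and check $x^\transpose(PF+F^\transpose P)x$; choosing $p_i=e_i/d_i$ one shows $D_e^{1/2}(\cdot)$\dots. The tidiest argument: because $-Fd>0$ and $-F^\transpose e>0$, Proposition~\ref{prop:pos-stab}(a) holds, and then $PF+F^\transpose P\prec 0$ with $P=\diag{e_i/d_i}$ follows from the identity $z^\transpose(PF+F^\transpose P)z=2\sum_{i}(e_i/d_i)z_i(Fz)_i$ combined with the Metzler sign pattern and a completion-of-squares bounding each cross term $z_iz_j$ by $\tfrac12(c\,z_i^2+c^{-1}z_j^2)$ with weights chosen from $d$ and $e$. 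I expect this to be the main obstacle: getting the weighting in the completion of squares exactly right so that the diagonal deficits $-(Fd)_i>0$ and $-(F^\transpose e)_i>0$ absorb all the off-diagonal cross terms.

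For $(d)\Rightarrow(a)$: this is immediate and does not use the Metzler structure—if $PF+F^\transpose P\prec 0$ for some $P\succ 0$, then $V(\xi)=\xi^\transpose P\xi$ is a strict Lyapunov function for $\dot\xi=F\xi$, so $F$ is Hurwitz; this is exactly~\eqref{eq:lyap_lmi}. Closing the loop $(a)\Rightarrow(b)\Rightarrow(d)\Rightarrow(a)$, adjoining $(a)\Rightarrow(c)$ and $(c)\Rightarrow(a)$ (the latter by the same Lyapunov argument applied to $F^\transpose$, which is Hurwitz iff $F$ is), establishes the equivalence of all four statements. Throughout I would cite~\cite{berman1994nonnegative} for the $M$-matrix facts and~\cite{rantzer2015ejc} for the positive-systems Lyapunov characterisation, keeping the completion-of-squares computation in $(b)\Rightarrow(d)$ as the only place requiring real work.
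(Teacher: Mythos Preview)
The paper does not actually prove this proposition: it is stated as a well-known fact with citations to~\cite{rantzer2015ejc, rantzer2016kalman, fan1958topological, varga1976recurring}, so there is no ``paper's own proof'' to compare against. Your outline therefore has to be judged on its own merits.

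Your implications $(a)\Rightarrow(b)$ via $M$-matrix inverse nonnegativity and $(d)\Rightarrow(a)$ via the Lyapunov inequality~\eqref{eq:lyap_lmi} are correct and standard. The genuine gap is $(b)\Rightarrow(d)$: you pick the right candidate $P=\diag{e_1/d_1,\dots,e_n/d_n}$ (this is exactly the construction the paper exploits later, e.g.\ in the corollary after Theorem~\ref{prop:comp-ma-hinf}), but then you cycle through diagonal dominance, a ``weighted test'', and completion of squares without settling on any of them, and you explicitly flag this step as the main obstacle. In fact no completion of squares is needed. With $Q:=PF+F^\transpose P$ you have
\[
Qd \;=\; P(Fd)+F^\transpose(Pd)\;=\;P(Fd)+F^\transpose e,
\]
since $Pd=e$ entrywise; both summands are strictly negative by~(b) and~(c), so $Qd<0$. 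As you already observed, $Q$ is symmetric Metzler, hence $D^{-1}QD$ with $D=\diag{d}$ is symmetric Metzler with strictly negative row sums, i.e.\ strictly diagonally dominant with negative diagonal. Gershgorin then gives $D^{-1}QD\prec 0$, and congruence yields $Q\prec 0$. This is the one-line verification your proposal is missing.

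A minor logical point: in your chain you assert $(b)\Leftrightarrow(c)$ ``since each is equivalent to $(a)$'' immediately after proving only $(a)\Rightarrow(b)$ and $(a)\Rightarrow(c)$; the converses have not yet been established at that stage of your cycle. Either prove $(b)\Rightarrow(a)$ directly first (the same Gershgorin-after-scaling argument does it), or postpone the $(b)\Leftrightarrow(c)$ claim until the cycle is closed.
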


\begin{prop}\label{prop:pos-hinf}
	Consider system~\eqref{eq:system-comparison} where $G$, $H$, $J$ are nonnegative matrices, while $F$ is a Hurwitz and Metzler matrix. Then the following statements are equivalent for a scalar $\delta$:
\vspace{-1mm}
	 \begin{enumerate}[(a)]
\item $\| H(sI - F)^{-1} G + J\|_\Hinf<\delta$;
\item $\delta > \overline{\sigma}(J)$ and there exists a diagonal matrix $P\succ 0$ such that
	\begin{equation} \label{ric-ineq-pos}
	P F + F^\transpose P  + H^\transpose H- ( P G + H^\transpose J) (J^\transpose J - \delta^{2} I )^{-1} (J^\transpose H + G^\transpose P)  \prec 0;
	\end{equation}
	\item $\overline{\sigma}(-H F^{-1} G + J) < \delta$;
\item  There exist vectors $d, e \in \Rp^n$, $g\in \Rnn^{n_o}$, $f\in \Rp^{n_i}$ such that
%\vspace{-1mm}
	\begin{align}
	& F d + G f < 0,           && H d + J f \le g,  \label{cond-pos-hinf-1}\\
	& F^\transpose e + H^\transpose g < 0, && G^\transpose e + J^\transpose g < \delta^2 f.  \label{cond-pos-hinf-2}
	\end{align}
   \end{enumerate}
\end{prop}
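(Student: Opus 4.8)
The plan is to establish a cycle of implications connecting the four statements, leveraging the Bounded Real Lemma (Proposition~\ref{prop:brl}) together with the special structure afforded by positivity (Proposition~\ref{prop:pos-stab}). First, I would dispense with the equivalence $(a)\Leftrightarrow(b)$: this is nothing but Proposition~\ref{prop:brl}(a) specialised to the positive system~\eqref{eq:system-comparison}, \emph{except} that the Riccati solution $P$ is claimed to be diagonal. To upgrade a generic $P\succ 0$ to a diagonal one, I would invoke the standard fact that for positive systems the $\Hinf$ norm is attained at $s=0$ (zero frequency), which is precisely the content of $(a)\Leftrightarrow(c)$: since $H(sI-F)^{-1}G+J$ has a nonnegative impulse response plus nonnegative feedthrough, $\|H(i\omega I-F)^{-1}G+J\|_2\le\|H(-F)^{-1}G+J\|_2$ for all $\omega$ (the DC matrix dominates entrywise in modulus, hence in spectral norm after another elementary bound using nonnegativity). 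Then the existence of a diagonal $P$ can be obtained by a scaling/Perron–Frobenius argument analogous to the one establishing Proposition~\ref{prop:pos-stab}(d): one builds $P=\diag{d_1/e_1,\dots}$ or similar from the vectors produced in statement $(d)$.

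For the concrete bookkeeping I would run the cycle $(c)\Rightarrow(d)\Rightarrow(b)\Rightarrow(a)\Rightarrow(c)$. The implication $(a)\Rightarrow(c)$ is the DC-dominance observation just described. For $(c)\Rightarrow(d)$: assuming $\overline\sigma(-HF^{-1}G+J)<\delta$, I would first pick $f\in\Rp^{n_i}$ and $g\in\Rnn^{n_o}$ witnessing a strict vector version of the small-gain condition $G^\transpose(-F^\transpose)^{-1}H^\transpose g + J^\transpose g<\delta^2 f$ together with $H(-F)^{-1}Gf+Jf\le g$; existence of such vectors follows by applying Proposition~\ref{prop:pos-stab} to the (Metzler, Hurwitz) matrix $F$ and a perturbation argument that turns the strict spectral inequality into strict entrywise inequalities, using that $(-F)^{-1}\ge0$. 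Setting $d=(-F)^{-1}Gf>0$ (positive because $(-F)^{-1}\ge 0$, $G\ge0$, $f>0$, plus a genericity/irreducibility-free argument via strictness) then gives $Fd+Gf=0$, which I would relax to $Fd+Gf<0$ by enlarging $f$ slightly, and the remaining inequalities in~\eqref{cond-pos-hinf-1}--\eqref{cond-pos-hinf-2} drop out. For $(d)\Rightarrow(b)$: given $d,e,f,g$, I would set $P=\diag{e_1/d_1,\dots,e_n/d_n}$ (a diagonal positive matrix), and verify~\eqref{ric-ineq-pos} by a completion-of-squares / Schur-complement manipulation — rewriting~\eqref{ric-ineq-pos} as the negative definiteness of a Schur complement of $\begin{bmatrix}PF+F^\transpose P+H^\transpose H & PG+H^\transpose J\\ G^\transpose P+J^\transpose H & J^\transpose J-\delta^2 I\end{bmatrix}$ and testing it against the vector built from $d$ and $f$, using that for Metzler $F$ and nonnegative data the off-diagonal sign pattern lets the scalar inequalities~\eqref{cond-pos-hinf-1}--\eqref{cond-pos-hinf-2} certify the matrix inequality. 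Finally $(b)\Rightarrow(a)$ is immediate from Proposition~\ref{prop:brl}(a) (a diagonal $P\succ0$ is in particular a $P\succ0$), and $\overline\sigma(J)<\delta$ is part of $(b)$.

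The main obstacle I anticipate is the step $(c)\Rightarrow(d)$ — more precisely, converting the single spectral-norm inequality $\overline\sigma(-HF^{-1}G+J)<\delta$ into the \emph{strict vectorial} certificate~\eqref{cond-pos-hinf-1}--\eqref{cond-pos-hinf-2} with all the right positivity/nonnegativity qualifiers on $d,e,f,g$. The subtlety is that $\overline\sigma(\cdot)<\delta$ is an $\ell_2$-type statement about the static gain matrix $M:=-HF^{-1}G+J\ge 0$, whereas~\eqref{cond-pos-hinf-2} asks for a Perron-type eigenvector inequality $M^\transpose M$-flavoured in the $\ell_2$ sense but written as a strict linear inequality. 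The clean route is: $\overline\sigma(M)<\delta \iff \delta^2 I-M^\transpose M\succ0 \iff$ there exists $f>0$ with $(\delta^2 I-M^\transpose M)f>0$ — the last equivalence because $M^\transpose M\ge 0$ is a nonnegative matrix, so $\delta^2 I - M^\transpose M$ being a positive-definite matrix with Metzler-like structure on the relevant cone means it admits a strictly positive vector mapped strictly positively (again Proposition~\ref{prop:pos-stab}-style reasoning applied to $-(\,\delta^2 I - M^\transpose M\,)$'s "Metzler completion", or directly: a symmetric positive definite $Z$ with $Z\mathbf 1$ possibly not positive can still be scaled). One must be a little careful here since $\delta^2 I-M^\transpose M$ need not itself be Metzler; the honest fix is to note $M^\transpose M\ge 0$ and use that $\delta>\overline\sigma(M)$ implies $\delta>\rho(M^\transpose M)^{1/2}$, then apply Perron–Frobenius to $M^\transpose M$ to extract a positive eigenvector, and perturb. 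Once $f$ is in hand, $g:=Mf$ (entrywise nonnegative) and $d:=(-F^{-1})Gf$, $e:=(-F^{-\transpose})H^\transpose g$ complete the data, with strictness recovered by the usual $\epsilon$-inflation of $f$. I would flag this as the one place where the write-up needs genuine care rather than routine algebra.
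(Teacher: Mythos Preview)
The paper does not actually prove Proposition~\ref{prop:pos-hinf}: it is stated in the preliminaries as a known result, attributed to~\cite{rantzer2015ejc, rantzer2016kalman, fan1958topological, varga1976recurring}, with the remark that condition~(d) is condition~(1.4) of Theorem~1 in~\cite{rantzer2016kalman}. There is therefore no ``paper's own proof'' to compare against.

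Your reconstruction is essentially the standard argument and is sound. Two comments. First, for $(d)\Rightarrow(b)$ your choice $P=\diag{e_1/d_1,\dots,e_n/d_n}$ is exactly what the paper later records as Corollary~1(a); the verification is cleaner than you indicate once you notice that $Pd=e$ and that the full KYP block matrix
\[
\begin{bmatrix}PF+F^\transpose P+H^\transpose H & PG+H^\transpose J\\ G^\transpose P+J^\transpose H & J^\transpose J-\delta^2 I\end{bmatrix}
\]
is symmetric with nonnegative off-diagonal entries (hence Metzler), so applying it to the positive vector $\begin{bmatrix}d\\ f\end{bmatrix}$ and invoking~\eqref{cond-pos-hinf-1}--\eqref{cond-pos-hinf-2} gives a strictly negative image, which by Proposition~\ref{prop:pos-stab} forces negative definiteness --- no completion of squares needed. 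Second, for $(c)\Rightarrow(d)$ your flagged fix is correct: $M^\transpose M$ is entrywise nonnegative, so $\rho(M^\transpose M)<\delta^2$ yields $f>0$ with $M^\transpose Mf<\delta^2 f$ directly from the M-matrix inverse $(\delta^2 I-M^\transpose M)^{-1}\ge\delta^{-2}I$, with no irreducibility assumption. The only genuine care point is strict positivity of $d=(-F)^{-1}Gf$ and $e=(-F)^{-\transpose}H^\transpose g$, which can fail if $G$ or $H$ has zero rows and $F$ is reducible; the standard repair is to add $\varepsilon\tilde d$, $\varepsilon\tilde e$ with $F\tilde d<0$, $F^\transpose\tilde e<0$ (Proposition~\ref{prop:pos-stab}) and absorb the slack into $g$.
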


\vspace{1mm}

Note that condition (d) can be obtained from condition (1.4) in Theorem~1 in~\cite{rantzer2016kalman} in the case of strict inequalities. If a certain $g_k =0$, then the whole row of the matrices $H$ and $J$ is equal to zero. Therefore, without loss of generality, we can assume that $g$ is a positive vector.
\subsection{Definition of a Comparison System}
We say that a matrix $A\in\R^{N\times N }$ has \emph{$\alpha=\{k_1, \dots, k_n\}$-partitioning} with $N = \sum_{i = 1}^n k_i$, if the matrix $A$ is written with $A_{i j}\in\R^{k_i\times k_j}$ as follows
\[
A = \begin{bmatrix}
A_{1 1}   & \dots   & A_{1 n} \\
\vdots    & \ddots  & \vdots  \\
A_{n 1}   & \dots   & A_{n n}
\end{bmatrix}.
\]
The matrix $A\in\R^{N\times N}$ is \emph{$\alpha$-diagonal} if it is $\alpha$-partitioned and $A_{i j} = 0$ for $i\ne j$. The matrix $A$ is \emph{$\alpha$-diagonally stable}, if there exists an $\alpha$-diagonal positive definite $P\in\R^{N\times N}$ satisfying~\eqref{eq:lyap_lmi}.

Our goal is to perform analysis of partitioned systems~\eqref{eq:system-full} using only meta-information about the system, such as, the norms of the blocks $A_{i j}$, $B_{i j}$, $C_{i j}$, where the indices of $B_{i j}$ take the values $i = 1, \dots,n$, $j = 1,\dots, n_i$, while the indices of $C_{i j}$ take the values $i = 1,\dots, n_o$, $j = 1,\dots,n$. Using this meta-information, we define an internally positive system~\eqref{eq:system-comparison} with $n\le N$, $n_i \le N_i$, $n_o \le N_o$ that we will call a \emph{comparison system}. If we take $n_i< N_i$ and $n_o < N_o$ it means we lump some of the inputs and outputs into one signal. The main question is how to choose $F$, $G$, $H$, $J$ so that analysis of the comparison system yields meaningful properties of system~\eqref{eq:system-full}. We first present a new comparison matrix inspired by~\cite{sootla2017blocksdd, xiang1998weak}.
\begin{defn}\label{def:block-comp-1}
	Given an $\alpha$-partitioned matrix $A$ with Hurwitz $A_{i i}$, we define a comparison matrix $\cM^\alpha$ as follows:
	\begin{equation}
	\cM^\alpha_{i j} = \begin{cases} -1 &  \text{if }i = j, \\
	\|(s I - A_{i i})^{-1}A_{i j}\|_\Hinf & \textrm{otherwise}.
	\end{cases} \label{block-comparison-1}
	\end{equation}
\end{defn}
Definition~\ref{def:block-comp-1} is in the spirit of the generalisations of scaled diagonally dominant matrices discussed in~\cite{polman1987incomplete, feingold1962block, xiang1998weak} and {is} a direct generalisation of the definition in~\cite{sootla2017blocksdd}. In order to streamline the presentation we discuss the connection to~\cite{sootla2017blocksdd} in the Appendix.

Based on the comparison matrix $\cM^\alpha(A)$, we define the comparison system as follows:
\begin{equation}
\begin{aligned}
F = \cM^\alpha(A),\, G_{i l} &= \| (s I -A_{i i})^{-1} B_{i l} \|_\Hinf, \\
H_{k j} = \| C_{k j} \|_2, J_{k l} &= \|D_{k l}\|_2, \label{comp-sys-hard}
\end{aligned}
\end{equation}
for $i, j = 1,\dots, n$, $k = 1,\dots, n_o$, $l = 1,\dots, n_i$.

\section{Block-diagonal Solutions to the \texorpdfstring{$\Hinf$}{H infinity} Riccati Inequalities}\label{s:prop-comp-sys}
Our main theoretical result states that the $\Hinf$ norm of a system is bounded above by the $\Hinf$ norm of its comparison system. 

\begin{thm}\label{prop:comp-ma-hinf}
	Consider system~\eqref{eq:system-full} with the comparison system~\eqref{eq:system-comparison} defined in~\eqref{comp-sys-hard}. If $F$ is Hurwitz and $\|H (s I-F)^{-1}G +J\|_\Hinf<\delta$,
	then $\|C(s I - A)^{-1} B +D \|_\Hinf < \delta$ and there exist $P_i\succ 0$ such that~\eqref{ric-hinf} holds with $P = \diag{P_1, \dots, P_n}$.
\end{thm}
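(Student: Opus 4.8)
The plan is to convert the comparison-system hypotheses into a vector (linear-programming) certificate via Proposition~\ref{prop:pos-hinf}, use that certificate to manufacture each diagonal block $P_i$ by applying the Bounded Real Lemma to a suitably \emph{rescaled} copy of the $i$-th diagonal subsystem $(A_{ii},[A_{i,-i}\ B_i],I)$, and finally assemble $P=\diag{P_1,\dots,P_n}$ through an S-procedure/dissipation argument in which the comparison inequalities exactly cancel the coupling and input terms. Since $F=\cM^\alpha(A)$ is Metzler and (by hypothesis) Hurwitz, $G,H,J\ge0$, and $\|H(sI-F)^{-1}G+J\|_\Hinf<\delta$, Proposition~\ref{prop:pos-hinf}(b),(d) supplies $\overline{\sigma}(J)<\delta$ and strictly feasible $d,e\in\Rp^n$, $g\in\Rp^{n_o}$, $f\in\Rp^{n_i}$ for~\eqref{cond-pos-hinf-1}--\eqref{cond-pos-hinf-2}. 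I will also repeatedly invoke the elementary block-norm bound: if a matrix $M$ is partitioned into blocks $M_{pq}$ and $\widehat M$ collects the block norms $\|M_{pq}\|_2$, then $\|Mv\|\le\|\widehat M\bar v\|$ with $\bar v_q=\|v_q\|$; this yields $\overline{\sigma}(D)\le\overline{\sigma}(J)<\delta$ (so~\eqref{ric-hinf} is well posed) and the subadditivity $\|(sI-A_{ii})^{-1}[M_1\ \cdots\ M_r]\|_\Hinf^2\le\sum_q\|(sI-A_{ii})^{-1}M_q\|_\Hinf^2$.

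Next I fix the weights $\epsilon_i=e_i/d_i$, $\theta_{ij}=\cM^\alpha_{ij}e_i/d_j$ for $j\ne i$, and $\sigma_{il}=G_{il}e_i/f_l$ (indices with $A_{ij}=0$ or $B_{il}=0$ are simply dropped, their contribution being zero). Combining the subadditivity bound with the definitions~\eqref{block-comparison-1},~\eqref{comp-sys-hard}, the first inequality in~\eqref{cond-pos-hinf-1}, namely $d_i>\sum_{j\ne i}\cM^\alpha_{ij}d_j+\sum_l G_{il}f_l$, translates precisely into: the rescaled subsystem $\bigl(A_{ii},[\tfrac{1}{\sqrt{\theta_{ij}}}A_{ij}\ \ \tfrac{1}{\sqrt{\sigma_{il}}}B_{il}],\sqrt{\epsilon_i}\,I,0\bigr)$ has $\Hinf$ norm strictly below $1$. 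Since $A_{ii}$ is Hurwitz, Proposition~\ref{prop:brl}(a) (applied with $D=0$, $\delta=1$) then produces $P_i\succ0$ with
\[
P_iA_{ii}+A_{ii}^\transpose P_i+\epsilon_i I+\sum_{j\ne i}\tfrac{1}{\theta_{ij}}P_iA_{ij}A_{ij}^\transpose P_i+\sum_l\tfrac{1}{\sigma_{il}}P_iB_{il}B_{il}^\transpose P_i\prec0 .
\]

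For the assembly, take arbitrary $x,u$, set $P=\diag{P_1,\dots,P_n}$, write $\bar x_i=\|x_i\|$, $\bar u_l=\|u_l\|$, and expand $2x^\transpose P(Ax+Bu)=\sum_i 2x_i^\transpose P_i(Ax+Bu)_i$. Applying Young's inequality with weights $\theta_{ij},\sigma_{il}$ to the cross terms $2x_i^\transpose P_iA_{ij}x_j$ and $2x_i^\transpose P_iB_{il}u_l$ and absorbing the $P_i(\cdot)(\cdot)^\transpose P_i$ remainders into the block inequality above collapses everything to $2x^\transpose P(Ax+Bu)\le\sum_j(-\epsilon_j+\sum_{i\ne j}\theta_{ij})\bar x_j^2+\sum_l(\sum_i\sigma_{il})\bar u_l^2$. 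Bounding $\|Cx+Du\|^2$ by the block-norm estimate followed by a $d,f$-weighted Cauchy--Schwarz that invokes the second inequality of~\eqref{cond-pos-hinf-1}, $\sum_j H_{kj}d_j+\sum_l J_{kl}f_l\le g_k$, contributes $\sum_j\tfrac{\sum_k H_{kj}g_k}{d_j}\bar x_j^2+\sum_l\tfrac{\sum_k J_{kl}g_k}{f_l}\bar u_l^2$. Substituting the chosen weights, the coefficient of $\bar x_j^2$ becomes $\tfrac{1}{d_j}\bigl(-e_j+\sum_{i\ne j}\cM^\alpha_{ij}e_i+\sum_k H_{kj}g_k\bigr)$, negative by the first inequality of~\eqref{cond-pos-hinf-2}, and the coefficient of $\bar u_l^2$ becomes $\tfrac{1}{f_l}\bigl(\sum_i G_{il}e_i+\sum_k J_{kl}g_k\bigr)-\delta^2$, negative by the second. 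Hence $2x^\transpose P(Ax+Bu)+\|Cx+Du\|^2-\delta^2\|u\|^2<0$ uniformly in $(x,u)$; a Schur-complement rearrangement (legitimate because $\overline{\sigma}(D)<\delta$) is exactly~\eqref{ric-hinf} for this block-diagonal $P$, and~\eqref{ric-hinf} with $P\succ0$ forces $A$ Hurwitz, so Proposition~\ref{prop:brl}(a) gives $\|C(sI-A)^{-1}B+D\|_\Hinf<\delta$.

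I expect the crux to be the simultaneous fit of the weights rather than any isolated calculation: $\epsilon_i,\theta_{ij},\sigma_{il}$ must at once (a) keep each per-block auxiliary $\Hinf$ gain below $1$, so that the $P_i$ exist, and (b) make the reassembled state coefficients negative and the input coefficients below $\delta^2$. These two demands pull the parameters in opposite directions and are reconciled only because the comparison certificate is two-sided, i.e.\ because \emph{both} halves~\eqref{cond-pos-hinf-1} and~\eqref{cond-pos-hinf-2} of Proposition~\ref{prop:pos-hinf}(d) are available; the choice $\theta_{ij}=\cM^\alpha_{ij}e_i/d_j$, $\sigma_{il}=G_{il}e_i/f_l$ is essentially the unique one threading both. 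A secondary subtlety is the output term: a crude bound on $\|Cx+Du\|^2$ does not line up with the certificate, so the $d,f$-weighted Cauchy--Schwarz step (using~\eqref{cond-pos-hinf-1}) is genuinely needed to keep it in step.
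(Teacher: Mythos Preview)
Your argument is correct and follows the same overall strategy as the paper: extract the LP certificate $(d,e,g,f)$ from Proposition~\ref{prop:pos-hinf}, use the first line of~\eqref{cond-pos-hinf-1} together with the subadditivity bound to force each rescaled block system to have $\Hinf$ gain below one, obtain $P_i$ from Proposition~\ref{prop:brl}, and then assemble using the remaining LP inequalities. Your weights $\epsilon_i,\theta_{ij},\sigma_{il}$ coincide with the paper's $\phi_{ii},\phi_{ij},\gamma_{il}$.

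The one genuine difference is in the assembly step. The paper routes through Lemma~\ref{lem:ric-dist}, which packages the per-block Riccati inequalities and the coupling constraints as the matrix conditions~\eqref{ric:dis-dis-test}, and then performs a sequence of Schur-complement and congruence manipulations on block LMIs to reach~\eqref{ric-hinf}. You instead work directly at the level of the quadratic form $2x^\transpose P(Ax+Bu)+\|Cx+Du\|^2-\delta^2\|u\|^2$, using Young's inequality for the cross terms and a $d,f$-weighted Cauchy--Schwarz for the output; these are precisely the scalar shadows of the paper's matrix manipulations. Your route is more compact for the theorem itself, while the paper's detour through~\eqref{ric:dis-dis-test} yields as a byproduct a standalone SDP relaxation (exploited in Remark~3 and the numerical experiments) that is strictly less conservative than the comparison-system bound.
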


Besides the norm estimation, Theorem~\ref{prop:comp-ma-hinf} provides a sufficient condition for the existence of block-diagonal solutions to $\Hinf$ Riccati inequality~\eqref{ric-hinf}. The proof of Theorem~\ref{prop:comp-ma-hinf} is constructive and will illustrate how the block-diagonal $P$ can be constructed using linear programming and linear algebra. It is also straightforward to show that stability of $\cM^\alpha(A)$ implies the existence of a block-diagonal solution to the Lyapunov inequality~\eqref{eq:lyap_lmi}. Again these solutions can be explicitly constructed.
To prove Theorem~\ref{prop:comp-ma-hinf}, we first present the following lemma.
\begin{lem}\label{lem:ric-dist}
	Consider system~\eqref{eq:system-full} with the comparison system~\eqref{eq:system-comparison} defined in~\eqref{comp-sys-hard}. Let 
	\begin{align*}
	 \cK_i &= \left\{k \in [1,\dots, n_o] \bigl| \,\, \|C_{k i}\|_2 \ne 0\right\}, \\
	 \cL_i &= \left\{l \in [1,\dots, n_i] \bigl| \,\, \|B_{i l}\|_2 \ne 0\right\}, \\
	 \cI_i &= \left\{j \in [1,\dots, n] \bigl| \,\, \|A_{i j}\|_2 \ne 0, j\ne i \right\}.
	\end{align*}
	If $F$ is Hurwitz and
	\[\|H (s I-F)^{-1}G +J\|_\Hinf<\delta,\]
	then there exist $P_i\succ 0$, $\Xi_{k i}, \Gamma_{i l}, \Phi_{i j}, \Upsilon_{k l}, \Lambda_{k l}\succeq 0$ such that:
	\begin{subequations}
		\label{ric:dis-dis-test}
		\begin{align}
		\label{ric:dis-dis-test-relax_a}     &\forall i: P_i A_{i i} + A_{i i}^\transpose P_i + \sum_{j = 1, 	j\ne i}^n \Phi_{j i} +  \sum_{k\in \cK_i}C_{k i}^\transpose \Xi_{k i}^{-1} C_{k i} + P_i\left(\sum_{j \in \cI_i} A_{i j} \Phi_{i j}^{-1} A_{i j}^\transpose  + \sum_{l \in \cL_i} B_{i l} \Gamma_{i l}^{-1} B_{i l}^\transpose\right) P_i \prec 0, \\
		&\forall k, l: \begin{bmatrix}
		\Upsilon_{k l} & -D_{k l}^\transpose \\
		-D_{k l} & \Lambda_{k l}
		\end{bmatrix} \succeq 0, \label{ric:dis-dis-test-relax_b}\\
		\label{ric:dis-dis-test-relax_c}&\forall k: \sum\limits_{i = 1}^n \Xi_{k i} + \sum\limits_{l = 1}^{n_i} \Lambda_{k l} \preceq I, \\
		\label{ric:dis-dis-test-relax_d}&\forall l:  \sum\limits_{i = 1}^n \Gamma_{i l} + \sum\limits_{k = 1}^{n_o} \Upsilon_{k l} \prec \delta^2 I.
		\end{align}
	\end{subequations}
\end{lem}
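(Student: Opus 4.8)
Here is how I would approach the proof.

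\medskip
\emph{Step 1 (positivity certificate of the comparison system).} The comparison system~\eqref{eq:system-comparison}--\eqref{comp-sys-hard} is internally positive: $F=\cM^\alpha(A)$ is Metzler because its off-diagonal entries are $\Hinf$ norms, and $G,H,J$ are nonnegative. Since $F$ is Hurwitz and $\|H(sI-F)^{-1}G+J\|_\Hinf<\delta$, Proposition~\ref{prop:pos-hinf} (implication (a)$\Rightarrow$(d), together with the remark that one may take $g>0$) furnishes $d,e\in\Rp^{n}$, $g\in\Rp^{n_o}$, $f\in\Rp^{n_i}$ such that, abbreviating $F_{ij}=\|(sI-A_{ii})^{-1}A_{ij}\|_\Hinf$ ($i\ne j$), $G_{il}=\|(sI-A_{ii})^{-1}B_{il}\|_\Hinf$, $H_{ki}=\|C_{ki}\|_2$, $J_{kl}=\|D_{kl}\|_2$, for all $i,k,l$
\begin{gather*}
\sum_{j\neq i}F_{ij}d_j+\sum_{l}G_{il}f_l<d_i,\qquad \sum_{j}H_{kj}d_j+\sum_{l}J_{kl}f_l\le g_k,\\
\sum_{j\neq i}F_{ji}e_j+\sum_{k}H_{ki}g_k<e_i,\qquad \sum_{i}G_{il}e_i+\sum_{k}J_{kl}g_k<\delta^{2} f_l.
\end{gather*}

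\emph{Step 2 (choice of weights, and the algebraic constraints).} I would take every unknown matrix in~\eqref{ric:dis-dis-test} to be a scalar multiple of the identity read off from this certificate,
\[
\Phi_{ij}=\tfrac{e_i}{d_j}F_{ij}I\ (i\ne j),\quad \Gamma_{il}=\tfrac{e_i}{f_l}G_{il}I,\quad \Xi_{ki}=\tfrac{d_i}{g_k}H_{ki}I,\quad \Upsilon_{kl}=\tfrac{g_k}{f_l}J_{kl}I,\quad \Lambda_{kl}=\tfrac{f_l}{g_k}J_{kl}I,
\]
leaving only $P_1,\dots,P_n$ undetermined. All of these are $\succeq 0$, and those that get inverted in~\eqref{ric:dis-dis-test} are $\succ0$ because the associated block is nonzero and $A_{ii}$ is Hurwitz. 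Then~\eqref{ric:dis-dis-test-relax_b}--\eqref{ric:dis-dis-test-relax_d} fall out at once: for~\eqref{ric:dis-dis-test-relax_b}, $\Upsilon_{kl}\Lambda_{kl}=\|D_{kl}\|_2^2 I\succeq D_{kl}^\transpose D_{kl}$, so a Schur complement of the lower-right block gives the $2\times2$ block inequality; and
$\sum_i\Xi_{ki}+\sum_l\Lambda_{kl}=\tfrac1{g_k}\big(\sum_j H_{kj}d_j+\sum_l J_{kl}f_l\big)I\preceq I$ and
$\sum_i\Gamma_{il}+\sum_k\Upsilon_{kl}=\tfrac1{f_l}\big(\sum_i G_{il}e_i+\sum_k J_{kl}g_k\big)I\prec\delta^2 I$ are exactly the second and fourth inequalities of Step~1.

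\emph{Step 3 (the per-block Riccati inequality~\eqref{ric:dis-dis-test-relax_a}).} Fix $i$. Put $Q_i=\sum_{j\ne i}\Phi_{ji}+\sum_{k\in\cK_i}C_{ki}^\transpose\Xi_{ki}^{-1}C_{ki}$ and $R_i=\sum_{j\in\cI_i}A_{ij}\Phi_{ij}^{-1}A_{ij}^\transpose+\sum_{l\in\cL_i}B_{il}\Gamma_{il}^{-1}B_{il}^\transpose$, and factor $Q_i=\cC_i^\transpose\cC_i$, $R_i=\cB_i\cB_i^\transpose$ with the obvious block structure: $\cC_i$ stacks $(\sum_{j\ne i}\Phi_{ji})^{1/2}$ and the blocks $\Xi_{ki}^{-1/2}C_{ki}$ ($k\in\cK_i$), and $\cB_i$ juxtaposes $A_{ij}\Phi_{ij}^{-1/2}$ ($j\in\cI_i$) and $B_{il}\Gamma_{il}^{-1/2}$ ($l\in\cL_i$). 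Then~\eqref{ric:dis-dis-test-relax_a} reads $P_iA_{ii}+A_{ii}^\transpose P_i+\cC_i^\transpose\cC_i+P_i\cB_i\cB_i^\transpose P_i\prec0$, which by the Bounded Real Lemma (Proposition~\ref{prop:brl}(a) with zero feedthrough and level $\delta=1$) has a solution $P_i\succ0$ iff $\|\cC_i(sI-A_{ii})^{-1}\cB_i\|_\Hinf<1$. I would bound this, frequency by frequency, by the spectral norm of the nonnegative matrix $\bar T_i$ whose entries are the $\Hinf$ norms of the blocks of $\cC_i(sI-A_{ii})^{-1}\cB_i$ — using $\|(sI-A_{ii})^{-1}A_{ij}\|_\Hinf=F_{ij}$, submultiplicativity of $\|\cdot\|_2$, and the fact that the $2$-norm of a block matrix is at most the $2$-norm of the matrix of its blocks' $2$-norms. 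The decisive observation is that the diagonal scaling makes $\bar T_i=r_is_i^\transpose$ rank one, with
\[
\|r_i\|_2^2=\tfrac1{d_i}\Big(\sum_{j\ne i}F_{ji}e_j+\sum_{k}H_{ki}g_k\Big)<\tfrac{e_i}{d_i},\qquad
\|s_i\|_2^2=\tfrac1{e_i}\Big(\sum_{j\ne i}F_{ij}d_j+\sum_{l}G_{il}f_l\Big)<\tfrac{d_i}{e_i},
\]
the strict inequalities being precisely the first and third inequalities of Step~1. Hence $\|\bar T_i\|_2^2=\|r_i\|_2^2\|s_i\|_2^2<1$, so $\|\cC_i(sI-A_{ii})^{-1}\cB_i\|_\Hinf<1$ and the required $P_i\succ0$ exists; together with the weights of Step~2 this gives~\eqref{ric:dis-dis-test}.

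\emph{Main obstacle.} The step I expect to be the real work is Step~3 — recognising that the scalar/diagonal choice of weights collapses the matrix of block $\Hinf$ norms to rank one, so that its spectral norm factorizes into exactly the two sides of the $i$-th rows of the comparison-system conditions $Fd+Gf<0$ and $F^\transpose e+H^\transpose g<0$, whose product is below one. Everything else (the Schur-complement and Bounded-Real-Lemma reformulations, and the degenerate cases $\cI_i=\emptyset$, $\cK_i=\emptyset$, or $\sum_{j\ne i}\Phi_{ji}=0$, where $\cC_i$ or $\cB_i$ may have no rows/columns) is routine. As a byproduct the construction is effective: $d,e,f,g$ come from a linear program (Proposition~\ref{prop:pos-hinf}(d)), the weights are explicit, and each $P_i$ from a standard Riccati solve, matching the claim preceding the lemma.
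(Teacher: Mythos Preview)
Your proof is correct and follows essentially the same plan as the paper: invoke Proposition~\ref{prop:pos-hinf}(d) to get the LP certificate $(d,e,f,g)$, take all auxiliary matrices $\Phi_{ij},\Gamma_{il},\Xi_{ki},\Upsilon_{kl},\Lambda_{kl}$ to be the obvious scalar multiples of the identity, and then verify~\eqref{ric:dis-dis-test-relax_b}--\eqref{ric:dis-dis-test-relax_d} directly from the LP inequalities. The weights you choose are identical to those in the paper.

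The one place where your argument differs is the verification of~\eqref{ric:dis-dis-test-relax_a}. The paper proceeds in two steps: first it bounds $\|(sI-A_{ii})^{-1}\cB_i\|_\Hinf^2<\phi_{ii}^{-1}=d_i/e_i$ using only the row of $Fd+Gf<0$, applies Proposition~\ref{prop:brl}(b) (with the observable pair $(I,A_{ii})$) to obtain a Riccati \emph{equation} with the term $\phi_{ii}I$, and then separately shows $\phi_{ii}I\succ Q_i$ from the row of $F^\transpose e+H^\transpose g<0$ before substituting. You instead absorb $Q_i$ into the output factor $\cC_i$ from the start and bound $\|\cC_i(sI-A_{ii})^{-1}\cB_i\|_\Hinf<1$ in one shot via the rank-one observation $\bar T_i\le r_is_i^\transpose$, which neatly combines both LP rows into the single product $\|r_i\|_2^2\|s_i\|_2^2<(e_i/d_i)(d_i/e_i)=1$. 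This is a tidy repackaging; the paper's two-step route has the small practical advantage that the resulting $P_i$ comes from a Riccati \emph{equation} with the simple cost $\phi_{ii}I$ (matching~\eqref{ric_actual} and the constructive remark after the theorem), whereas your one-step BRL invocation yields $P_i$ only through a Riccati inequality unless you additionally argue observability of $(\cC_i,A_{ii})$. A minor wording point: $\bar T_i$ as you define it (entries equal to the block $\Hinf$ norms) is not literally rank one; it is entrywise dominated by the rank-one matrix $r_is_i^\transpose$ after one more application of submultiplicativity, which is what you need and what you evidently intend.
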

\begin{proof}
	According to Proposition~\ref{prop:pos-hinf} there exist positive scalars $e_i$, $d_i$, $g_i$, $f_i$ such that~(\ref{cond-pos-hinf-1}, \ref{cond-pos-hinf-2}) hold. Let $\eta_{k j} = H_{k j} d_j/ g_k$, $\gamma_{i l} = G_{i l} e_i/ f_l$, $\phi_{i j} = |F_{i j}| e_i/d_j$, $\lambda_{k l} = J_{k l}f_l/g_k I$, and $\mu_{k l} = J_{k l} g_k/f_l I$. Since we need to show the existence of a feasible solution to~\eqref{ric:dis-dis-test}, assume that $\Phi_{i j}=\phi_{i j} I$, $\Xi_{k i} = \eta_{k i} I$, $\Gamma_{i l} = \gamma_{i l}I$, $\Upsilon_{k l} =\mu_{k l} I$, and  $\Lambda_{k l} =\lambda_{k l} I$, where lower case variables denote scalars. Note that by construction the matrices $\Phi_{i j}$, $\Xi_{k i}$, $\Gamma_{i l}$, $\Upsilon_{k l}$, $\Lambda_{k l}$ are  either zero or positive definite. Let
	\begin{align*}
	&\begin{aligned}
	B_i &= \begin{bmatrix} B_{i 1} & \cdots & B_{i n_i}
	\end{bmatrix},& C_{i}^\transpose &=\begin{bmatrix}
	C_{1 i}^\transpose &\cdots&C_{n_o i}^\transpose
	\end{bmatrix}, \\
	\Upsilon_k &= \diag{\Upsilon_{k 1}, \dots, \Upsilon_{k n_i}}, &  \Lambda_l &= \diag{\Lambda_{1 l}, \dots, \Lambda_{n_o l}}, \\
	\Gamma_i &= \diag{\Gamma_{i 1}, \dots, \Gamma_{i n_i}}, &  \Xi_i &= \diag{\Xi_{1 i}, \dots, \Xi_{n_o i}},
		\end{aligned}
	\end{align*}
{and let $A_{i,-i}$, $\Phi_{i,-i}$ be defined as in~\eqref{xi-i}.} We define the matrices $\tilde \Phi_{i,-i}$ and $\tilde A_{i,-i}$ by removing all blocks from $\Phi_{i,-i}$ and $A_{i,-i}$ with $j\not \in \cI_i$. Similarly we define the matrices $\tilde B_i$, $\tilde \Gamma_i$.
	
First, we prove the following $\Hinf$ norm bound for all $i$:
\begin{equation} \label{dis_actual}
	\|(s I - A_{i i})^{-1} \begin{bmatrix} \tilde A_{i, -i} \tilde\Phi_{i,-i}^{-1/2} & \tilde B_i \tilde \Gamma_i^{-1/2}\end{bmatrix}\|_\Hinf^2 < \phi_{i i}^{-1}.
	\end{equation}
	
This can be shown by recalling~\eqref{cond-pos-hinf-1}:
	\begin{equation*}
	\begin{aligned}
	& \|(s I - A_{i i})^{-1} \begin{bmatrix} \tilde A_{i, -i} \tilde\Phi_{i,-i}^{-1/2} & \tilde B_i \tilde \Gamma_i^{-1/2}\end{bmatrix}\|_\Hinf^2 \\
	=&    \max_{s \in \imath \R} \overline{\sigma}\Bigl(\sum_{j\in \cI_i} (s I - A_{i i})^{-1} A_{i j} A_{i j}^\transpose(s I - A_{i i})^{-\transpose} /\phi_{i j}   + \sum_{l\in \cL_i} (s I - A_{i i})^{-1} B_{i l} B_{i l}^\transpose(s I - A_{i i})^{-\transpose} /\gamma_{i l}\Bigr) \\
	\le & \sum_{j \in \cI_i} \max_{s \in \imath \R }\overline{\sigma}\left((s I - A_{i i})^{-1} A_{i j} A_{i j}^\transpose (s I - A_{i i})^{-\transpose}\right)/\phi_{i j}  + \sum_{l \in \cL_i} \max_{s \in \imath \R }\overline{\sigma}\left((s I - A_{i i})^{-1} B_{i l} B_{i l}^\transpose (s I - A_{i i})^{-\transpose}\right)/\gamma_{i l}
	\\
	=&    \sum_{j \in \cI_i} \max_{s \in \imath \R}(\overline{\sigma}((sI - A_{i i})^{-1}A_{i j}))^2/\phi_{i j}    +\sum_{l \in \cL_i} \max_{s \in \imath \R}(\overline{\sigma}((s I - A_{i i})^{-1}B_{i l}))^2/\gamma_{i l}\\
	= &\sum_{j\in \cI_i} F_{i j} d_j/e_i + \sum_{l\in\cL_i} G_{i l} f_l/e_i\\
	<&  d_i /e_i
	= \phi_{i i}^{-1}.
	\end{aligned}
	\end{equation*}
	
Since $(I,A)$ is always observable, the bounded real lemma (Proposition~\ref{prop:brl}) and~\eqref{dis_actual} imply that for all $i$ there exist $P_i\succ 0$ such that
	\begin{equation}
 P_i A_{i i} + A_{i i}^\transpose P_i+ \phi_{i i} I 
 P_i\left(\sum_{j \in \cI_i}A_{i j} \phi_{i j}^{-1}A_{i j}^\transpose + \sum_{l \in \cL_i} B_{i l}\gamma_{i l}^{-1} B_{i l}^\transpose\right) P_i = 0. \label{ric_actual}
	\end{equation}
	
Next, considering~\eqref{cond-pos-hinf-2}, we have:
	\begin{equation}
	\label{cond:ric-phi}
	\begin{aligned}	\phi_{i i} I = \frac{e_i}{d_i} I \succ \sum\limits_{j = 1, j \ne i}^n \frac{F_{j i} e_j}{d_i}I + \sum\limits_{k =1}^{n_o} \frac{H_{k i} g_k}{d_i}I
	\succeq &\sum\limits_{j = 1, j \ne i}^n \phi_{j i} I +\sum_{k \in \cK_i}  \frac{C_{k i}^\transpose C_{k i} g_k}{H_{k i} d_i} \\
	=&    \sum\limits_{j = 1, j \ne i}^n \phi_{j i} I  +  \sum_{k \in \cK_i}   \frac{C_{k i}^\transpose C_{k i}}{\eta_{k i}}.
	\end{aligned}
	\end{equation}

	Substituting~\eqref{cond:ric-phi} into~\eqref{ric_actual} leads to
	\begin{equation*}
	P_i A_{i i} + A_{i i}^\transpose P_i   + \sum_{j = 1, j\ne i}^n \phi_{j i} I +  \sum_{k \in \cK_i} C_{k i}^\transpose \eta_{k i}^{-1} C_{k i} +	P_i\left(\sum_{j \in \cI_i} A_{i j} \phi_{i j}^{-1} A_{i j}^\transpose  + \sum_{l \in \cL_i} B_{i l} \gamma_{i l}^{-1} B_{i l}^\transpose\right) P_i\prec 0.
	\end{equation*}
    
    This means~\eqref{ric:dis-dis-test-relax_a} holds with $\Phi_{i j}=\phi_{i j} I$, $\Xi_{k i} = \eta_{k i} I$, $\Gamma_{i l} = \gamma_{i l}I$. The coupling constraints~\eqref{ric:dis-dis-test-relax_b}-\eqref{ric:dis-dis-test-relax_d} are straightforward.
\end{proof}

We are now ready to prove the main result of this note.

\begin{proof-of}{Theorem~\ref{prop:comp-ma-hinf}}
Using Schur's complement we can obtain the following LMI for a sufficiently small $\varepsilon_1>0$:
	\begin{gather*}
	\begin{bmatrix}
	P_i A_{i i} + A_{i i}^\transpose P_i  +\varepsilon_1 I  + \sum\limits_{j = 1, j\ne i}^n \Phi_{j i} & P_i A_{i,-i} & P_i B_i & C_i^\transpose \\
	\ast                                                                  & -\Phi_{i,-i}    & 0       & 0 \\
	\ast                                                                  & 0            & -\Gamma_i& 0 \\
	\ast                                                                  & 0            & 0       & -\Xi_i
	\end{bmatrix} \preceq 0.
	\end{gather*}
	The inequality is not strict, since some of the columns and rows can be equal to zero. By rearranging the matrices so that left most corner lies on the $i$-th diagonal entry, summing the resulting matrices for all $i$, we get
	\begin{gather} \label{lmi-theorem1-1}
	\begin{bmatrix}
	P A + A^\transpose P  +\varepsilon_1 I & P B       & C^\transpose \\
	B^\transpose P       & -\sum_{i =1}^{n} \Gamma_i & 0 \\
	C              & 0         & - \sum_{i =1}^{n} \Xi_i
	\end{bmatrix}\preceq 0.
	\end{gather}
	Multiplying the resulting LMI from the right with
	\[\begin{bmatrix}
	I & 0 & 0 \\
	0 & I & 0 \\
	0 & D & I
	\end{bmatrix}\]
	and from the left with its transpose results in:
	\begin{gather*}
	\begin{bmatrix}
	P A + A^\transpose P  +\varepsilon_1 I & \begin{bmatrix}
	P B +C^\transpose D      & C^\transpose
	\end{bmatrix} \\
	\begin{bmatrix}
	B^\transpose P + D^\transpose C \\
	C
	\end{bmatrix}       & Y
	\end{bmatrix}\preceq 0,
	\end{gather*}
	where
	\begin{gather*}
	Y = \begin{bmatrix}
	I&  D^\transpose\\ 0 & I
	\end{bmatrix}\begin{bmatrix}
	-\sum_{i=1}^{n} \Gamma_i & 0 \\
	0 & -\sum_{i=1}^{n} \Xi_i
	\end{bmatrix}\begin{bmatrix}
	I & 0 \\ D & I
	\end{bmatrix}.
	\end{gather*}
	We can complete the proof if we show that for a small positive $\varepsilon_2$ {the following inequality holds:}
	\begin{gather}\label{cond_y}
	Y \succeq \begin{bmatrix}
	D^\transpose D - (\delta^2-\varepsilon_2) I & 0 \\ 0 & - I\end{bmatrix}.
	\end{gather}
	In this case, we get
	\begin{gather*}
	\begin{bmatrix}
	P A + A^\transpose P +\varepsilon_1 I & P B +C^\transpose D      & C^\transpose \\
	B^\transpose P + D^\transpose C  & D^\transpose D - (\delta^2-\varepsilon_2) I & 0 \\
	C                                & 0                           & -I
	\end{bmatrix}\preceq 0,
	\end{gather*}
	and $\delta > \overline{\sigma}(D)$. Setting $\varepsilon_2 = 0$, application of the Schur complement and Proposition~\ref{prop:brl} will complete the proof.
	Multiplying condition~\eqref{cond_y} from the right with \[\begin{bmatrix} I & 0\\ - D & I \end{bmatrix}\] and from the left with its transpose we obtain an equivalent condition:
\begin{gather*}
\begin{bmatrix}
-\sum_{i=1}^{n} \Gamma_i & 0 \\
0 & -\sum_{i=1}^{n} \Xi_i
\end{bmatrix} \succeq \begin{bmatrix}
 -(\delta^2-\varepsilon_2) I & D^\transpose \\
D & -I
\end{bmatrix}.
\end{gather*}
According to \eqref{ric:dis-dis-test-relax_c} and~\eqref{ric:dis-dis-test-relax_d}, for a small $\varepsilon_2>0$ we have
\[
\begin{aligned}
&-\sum_{i =1}^{n} \Gamma_i+(\delta^2-\varepsilon_2) I \succeq \sum_{k=1}^{n_o} \Upsilon_k,~~\\
&-\sum_{i =1}^{n} \Xi_i+I \succeq \sum_{l=1}^{n_i} \Lambda_l,
\end{aligned}
\]
and we need to show that 	
	\begin{gather*}
\begin{bmatrix}
\sum_{k=1}^{n_o} \Upsilon_k & -D^\transpose \\
-D & \sum_{l=1}^{n_i} \Lambda_l
\end{bmatrix} \succeq 0.
\end{gather*}
The latter LMI follows from composing and adding the LMIs in~\eqref{ric:dis-dis-test-relax_b} in an appropriate manner.
\end{proof-of}

\begin{rem}[Construction of Lyapunov Functions]
The proof of Theorem~\ref{prop:comp-ma-hinf} provides a {constructive} way to find a block-diagonal solution $P = \diag{P_1, \dots, P_n} $ to the $\Hinf$ Riccati inequality, provided the comparison system is stable:
	
  \emph{Step 1:} Compute the comparison system~\eqref{comp-sys-hard} and its $\Hinf$ norm;

  \emph{Step 2:} Solve the linear program~\eqref{cond-pos-hinf-1} and \eqref{cond-pos-hinf-2};

  \emph{Step 3:} Solve the individual Riccati equation~\eqref{ric_actual} to find $P_i$.

This procedure clearly shows that a block-diagonal solution $P = \diag{P_1, \dots, P_n}$ can be constructed using linear programs and linear algebra if the comparison system is stable. {This requires less memory and computational power than solving an SDP (\emph{e.g.},~\eqref{ric-hinf} or~\eqref{ric:dis-dis-test}), which will be demonstrated using numerical examples in Section~\ref{ss:time}.}
\end{rem}

\begin{rem}[Small-gain Interpretation]
If $B$, $C$, $D$ are zero matrices, condition~\eqref{ric:dis-dis-test-relax_a} leads to the following small-gain type condition: The matrix $A$ is $\alpha$-diagonally stable if there exist $\Phi_{j i} \succeq 0$ such that
\begin{gather*}
\|\Phi_{i i}^{1/2} (s I - A_{i i})^{-1} \tilde A_{i,-i} \tilde \Phi_{i,-i}^{-1/2}\|_\Hinf < 1, \forall i,
\end{gather*}
where $A_{i,-i}$, $\Phi_{i,-i}$ are defined as in~\eqref{xi-i}, $\tilde A_{i,-i}$ $\tilde \Phi_{i,-i}$ are obtained by removing all blocks from $\Phi_{i,-i}$, $A_{i,-i}$ with $j\not \in \cI_i$ (i.e., zero blocks), and $\Phi_{i i} \succ \sum_{j = 1, 	j\ne i}^n \Phi_{j i}$. This interpretation shows that our conditions can take implicitly into account scaling factors, which are common in small-gain type results.
\end{rem}
\begin{rem}[SDP Conditions~\eqref{ric:dis-dis-test}]
These conditions are clearly less conservative than the comparison system approach, as in the proof of Lemma~\ref{lem:ric-dist} we used
several relaxations to obtain the SDP conditions from LP conditions~\eqref{cond-pos-hinf-1} and~\eqref{cond-pos-hinf-2}. On the other hand, solving~\eqref{ric:dis-dis-test} is more computationally expensive as it involves SDP constraints. We note that conditions~\eqref{ric:dis-dis-test} are of lower dimensions than~\eqref{ric-hinf}, which can be potentially taken advantage of. However, how to exploit the structure in~\eqref{ric:dis-dis-test} is not trivial and requires further research.
\end{rem}

We conclude this section by presenting some additional results for positive systems, which are straightforward to show using the proof of Theorem~\ref{prop:comp-ma-hinf}.

\begin{cor}
Consider a stable positive system~\eqref{eq:system-comparison}, then
the following statements hold:
\begin{enumerate}[(a)]
\item The solution to the Riccati inequality~\eqref{ric-ineq-pos}  can be computed as $P= \diag{e_1/d_1, \dots, e_n/d_n}$, where $e_i$ and $d_i$ satisfy~(\ref{cond-pos-hinf-1},~\ref{cond-pos-hinf-2});
\item Given positive vectors $d$, $e$ satisfying $F d <0$ and $e > - F^{-\transpose} H^\transpose H d$, we have $F^\transpose Q + Q F + H^\transpose H \prec 0$ with $Q= \diag{e_1/d_1, \dots, e_n/d_n}$;
\item Given positive vectors $d$, $e$ satisfying $F^\transpose d <0$ and $e > -F^{-1} G G^\transpose d$, we have $F P + P F^\transpose + G G^\transpose \prec 0$ with $P= \diag{e_1/d_1, \dots, e_n/d_n}$.
\end{enumerate}
\end{cor}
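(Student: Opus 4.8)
\emph{Strategy.} Each of the three parts asserts that an explicit diagonal matrix makes a particular \emph{symmetric} matrix negative definite, and in every case I would argue the same way: because $F$ is Metzler and $G,H,J$ are nonnegative, the symmetric matrix in question is itself \emph{Metzler}, so by Proposition~\ref{prop:pos-stab} it is Hurwitz---and therefore, being symmetric, negative definite---as soon as it maps one strictly positive vector into the open negative orthant. That vector will be built from $d$ and $e$ (and $f$, $g$ in part (a)), and the identity that makes the bookkeeping work is $Pd=e$ for $P=\diag{e_1/d_1,\dots,e_n/d_n}$ (likewise $Qd=e$), which is exactly what collapses the rows of the relevant matrix--vector product onto the defining inequalities.

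\emph{Part (a).} First I would rewrite the Riccati inequality~\eqref{ric-ineq-pos} as the equivalent bounded-real LMI
\[
L:=\begin{bmatrix} F^\transpose P+PF & PG & H^\transpose\\ G^\transpose P & -\delta^2 I & J^\transpose\\ H & J & -I\end{bmatrix}\prec 0,
\]
the equivalence being a pair of Schur complements, legitimate because $\delta>\overline{\sigma}(J)$ (this follows from Proposition~\ref{prop:pos-hinf}, since $d,e,f,g$ solve~\eqref{cond-pos-hinf-1}--\eqref{cond-pos-hinf-2}). The matrix $L$ is symmetric, and it is Metzler: inside the leading block the off-diagonal entries equal $F_{ji}P_{jj}+P_{ii}F_{ij}\ge 0$, inside the other two diagonal blocks they vanish, and the off-diagonal blocks $PG$, $H^\transpose$, $J^\transpose$ and their transposes are nonnegative. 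Testing $L$ on the positive vector $v=\begin{bmatrix} d\\ f\\ g\end{bmatrix}$ and using $Pd=e$, its three block rows are $\bigl(F^\transpose e+H^\transpose g\bigr)+P\bigl(Fd+Gf\bigr)$, $\bigl(G^\transpose e+J^\transpose g\bigr)-\delta^2 f$, and $Hd+Jf-g$; by~\eqref{cond-pos-hinf-1}--\eqref{cond-pos-hinf-2} the first two are strictly negative and the third is $\le 0$. To make the last row strict as well I would replace $g$ by $(1+\varepsilon)g$: this makes the third row $\le-\varepsilon g<0$, while it perturbs the first two rows by $\varepsilon H^\transpose g\ge 0$ and $\varepsilon J^\transpose g\ge 0$, hence leaves them strictly negative for $\varepsilon>0$ small enough. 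Then $L\begin{bmatrix} d\\ f\\ (1+\varepsilon)g\end{bmatrix}<0$ with strictly positive argument, so $L$ is Hurwitz by Proposition~\ref{prop:pos-stab} and, being symmetric, negative definite; undoing the Schur complements gives~\eqref{ric-ineq-pos} for $P=\diag{e_1/d_1,\dots,e_n/d_n}$.

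\emph{Parts (b) and (c).} The same recipe applies with test vector $v=d$. For (b), $M:=F^\transpose Q+QF+H^\transpose H$ is symmetric and Metzler (its off-diagonal entries are $F_{ji}Q_{jj}+Q_{ii}F_{ij}+(H^\transpose H)_{ij}\ge 0$), and using $Qd=e$ one gets $Md=\bigl(F^\transpose e+H^\transpose Hd\bigr)+Q\,(Fd)$; the second summand is strictly negative since $Fd<0$ and $Q\succ 0$ is diagonal, and the hypothesis on $e$ makes the first summand $F^\transpose e+H^\transpose Hd<0$, so $Md<0$ with $d>0$ and $M$ is Hurwitz by Proposition~\ref{prop:pos-stab}, hence negative definite. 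Part (c) is the transpose of this: $N:=FP+PF^\transpose+GG^\transpose$ is symmetric Metzler, and $Nd=\bigl(Fe+GG^\transpose d\bigr)+P\,(F^\transpose d)<0$ because $F^\transpose d<0$ and, by the hypothesis on $e$, $Fe+GG^\transpose d<0$.

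\emph{Main obstacle.} Once the two structural facts are in hand the remaining steps are routine, so I expect the effort to sit with them. The first is recognizing and checking that $L$ (resp.\ $M$, $N$) is Metzler---this is precisely what turns ``$Lv<0$ for some positive $v$'' into negative definiteness---and, if one prefers to work on~\eqref{ric-ineq-pos} directly instead of on $L$, one must also use that $(\delta^2 I-J^\transpose J)^{-1}\ge 0$ since $\delta^2 I-J^\transpose J$ is a nonsingular M-matrix; passing through $L$ avoids this. The second is the passage from the non-strict row $Hd+Jf\le g$ of~\eqref{cond-pos-hinf-1} to a strict $Lv<0$, which the $(1+\varepsilon)g$ perturbation handles using the strict slack already available in the remaining three inequalities. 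I would also note that in (b) and (c) the argument consumes the strict inequalities $F^\transpose e+H^\transpose Hd<0$ and $Fe+GG^\transpose d<0$; these should be taken as the hypotheses on $e$, being (slightly) stronger than the inequalities $e>-F^{-\transpose}H^\transpose Hd$ and $e>-F^{-1}GG^\transpose d$ as displayed.
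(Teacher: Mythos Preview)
Your argument is correct and takes a genuinely different route from the paper. The paper simply remarks that the corollary is ``straightforward to show using the proof of Theorem~\ref{prop:comp-ma-hinf}'', i.e., by specialising the Riccati-equation construction in Lemma~\ref{lem:ric-dist} to the scalar-block case. Your approach bypasses that machinery entirely: you observe that the relevant symmetric matrix ($L$, $M$, or $N$) is Metzler, so Proposition~\ref{prop:pos-stab} reduces negative definiteness to finding a single positive test vector, which you assemble directly from $d,e,f,g$ via the identity $Pd=e$. This is shorter and makes the role of~\eqref{cond-pos-hinf-1}--\eqref{cond-pos-hinf-2} completely transparent; the $(1+\varepsilon)g$ perturbation to absorb the non-strict row is a clean fix.

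Your caveat on parts~(b) and~(c) is not just a technicality---it is essential. The hypothesis $e>-F^{-\transpose}H^\transpose Hd$ as printed is \emph{strictly weaker} than $F^\transpose e+H^\transpose Hd<0$ (multiplying by $F^\transpose$ does not preserve the strict inequality, since $-F^\transpose$ is not entrywise nonnegative), and the corollary is in fact false under the printed hypothesis. For instance, take
\[
F=\begin{bmatrix}-1&0\\1&-1\end{bmatrix},\quad H=\begin{bmatrix}1&0\end{bmatrix},\quad d=\begin{bmatrix}1\\1.1\end{bmatrix},\quad e=\begin{bmatrix}1.01\\100\end{bmatrix}.
\]
Then $Fd<0$ and $-F^{-\transpose}H^\transpose Hd=(1,0)^\transpose<e$, yet $F^\transpose Q+QF+H^\transpose H$ with $Q=\diag{1.01,\,100/1.1}$ has determinant $\approx -8079<0$, hence is indefinite. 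The condition $F^\transpose e+H^\transpose Hd<0$ that your proof actually uses is precisely what~\eqref{cond-pos-hinf-2} gives when $G=0$, $J=0$ and one takes $g=Hd$; so your proposed correction is the natural one, and under it your Metzler argument for~(b) and~(c) goes through verbatim.
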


Note that in points (b) and (c), the computed solutions are not generally minimum trace solutions to these Lyapunov inequalities.

\section{Numerical Examples} \label{s:exam}
\subsection{\texorpdfstring{$\Hinf$}{H infinity} Performance Analysis with \texorpdfstring{$\alpha = \{2, 3\}$}{a = 2, 3}}

\begin{table}[b]
	\caption{Relative Errors of Sparse Estimates with Respect to $\Hinf$ norm } \label{tab:errors}
	\renewcommand{\arraystretch}{1.0}	
	\centering
	\begin{tabular}{ccccc}
\toprule
		&  $A^1$, $C^1$   &  $A^1$, $C^2$   & $A^2$, $C^1$   & $A^2$, $C^2$   \\
		%\hline
		\hline %\\
		$\dfrac{\delta_{\rm bd}}{\| C^i (s I - A^j)^{-1} B\|_\Hinf}$                &   $1$      &   $1.2914$   & $1$      & $1.7652$ \\[15pt]
		$\dfrac{\delta_{\rm sdp}}{\| C^i (s I - A^j)^{-1} B\|_\Hinf}$               &   $1$      &   $1.9448$   & $1$      & $4.6133$ \\[15pt]
		$\dfrac{\delta_{\rm scal}}{\| C^i (s I - A^j)^{-1} B\|_\Hinf}$              &   $1.0747$ &   $2.0948$   & $1.2545$ & $5.9495$ \\[15pt]
		$\dfrac{\| H (s I - F)^{-1} G\|_\Hinf}{\| C^i (s I - A^j)^{-1} B\|_\Hinf}$  &   $1.2947$ &   $3.3578$   & $1.6210$ & $12.4408$ \\
\bottomrule 	
\end{tabular}
\end{table}

Since the comparison system is computed using linear algebraic tools in a completely distributed manner, the scalability of the approach cannot be questioned. However, the conservatism of the obtained solutions is a valid concern. In order to illustrate some issues, consider a simple example with a $\{2,3\}$-partitioned system matrix. We consider systems with state-space matrices $A^i$, $B$, $C^j$ and $D = 0$.
\begin{gather*}
A^{i} = \left[ \begin{array}{c|c}
A_{11}^i & A_{12} \\ \hline A_{21} & A_{2 2}
\end{array}\right],
% A_{1 1}^2 = A_{11}^1/4, \\
A_{11}^1 =  \begin{bmatrix}
- 60 & 30 \\ 20 & -50
\end{bmatrix}, \,A_{1 1}^2 = A_{11}^1/4, \,
A_{12} = \begin{bmatrix}
6 & 6 & 5\\ 0 & 3 &1
\end{bmatrix}, 
A_{21} =  \begin{bmatrix}
4 & 2 \\ 7 &-5 \\ -1 & 1
\end{bmatrix}, \\
A_{22} =\begin{bmatrix}
-90 & 20 & 20 \\ 0 &-10 & 5 \\ -1 & 1 & 50
\end{bmatrix},
B =\begin{bmatrix} 3 & 2 & 5 & 1 &0 \end{bmatrix}^\transpose,\\
C^1 = \begin{bmatrix} 2 & 1 & 5 & 1 & 2 \end{bmatrix}, 
C^2 = \begin{bmatrix} -2 & 1 & 5 & 1 & 2 \end{bmatrix}.
\end{gather*}

One can easily verify that the corresponding comparison systems are all stable. Therefore, Theorem~\ref{prop:comp-ma-hinf} guarantees the existence of a block-diagonal solution to the Riccati inequality for all these systems. Now, we compute several estimates on the $\Hinf$ norm using block-diagonal solutions to the Riccati inequalities. In particular, we consider the following optimisation programs:
\begin{align}
\label{prog:bd_hinf2}\delta_{\rm bd} &= \min\limits_{P, \delta} \delta \\
\notag \text{subject to:  }      & P\succ 0 \text{ is $\alpha$-diagonal and satisfies~\eqref{ric-hinf}},
\end{align}
\begin{align}
\label{prog:bd_sdp}\delta_{\rm sdp} &= \min\limits_{P_i, \Xi_{k i}, \Gamma_{i l}, \Phi_{i j},\delta}\delta \\
\notag \text{subject to:  }      &  P_i\succ 0, \Xi_{k i}, \Gamma_{i l}, \Phi_{i j}\succeq 0 \text{ satisfy~\eqref{ric:dis-dis-test}},
\end{align}
\begin{align}
\label{prog:bd_sc}\delta_{\rm scal} &= \min\limits_{P_i, \xi_{k i}, \gamma_{i l}, \phi_{i j},  \delta}\delta \\
\notag \text{subject to:  }       &  P_i\succ 0, \Xi_{k i}, \Gamma_{i l}, \Phi_{i j}\succeq 0 \text{ satisfy~\eqref{ric:dis-dis-test}}, \\
\notag                           & \Xi_{k i} = \xi_{k i} I, \Gamma_{i l} =  \gamma_{i l} I , \Phi_{i j} =  \phi_{i j} I.
\end{align}

Table~\ref{tab:errors} lists the values $\delta_{\rm bd}$, $\delta_{\rm sdp}$, $\delta_{\rm scal}$, as well as $\| H (s I - F)^{-1} G\|_\Hinf$, normalised by the $\Hinf$ norm of the system. The results in Table~\ref{tab:errors} indicate that the values provided by the program~\eqref{prog:bd_hinf2} are significantly less conservative than the comparison system approach. However, by employing the program~\eqref{ric:dis-dis-test} we can bridge the gap between the values $\delta_{\rm bd}$ and $\| H (s I - F)^{-1} G\|_\Hinf$. Note that for some systems the values $\delta_{\rm sdp}$ are equal to $\|C^i (sI -A^j)^{-1} B\|_\Hinf$, while for others the gap between $\delta_{\rm bd}$ and $\delta_{\rm sdp}$ is not substantial. The SDPs~\eqref{ric:dis-dis-test} are of lower dimensions than~\eqref{ric-hinf}, which can be potentially exploited by distributed optimisation methods. However, this work is not trivial and requires further research. We perform additional simulations in Appendix. 

\subsection{Computational Time Comparison} \label{ss:time}

To demonstrate the scalability, we compare the CPU time required for computing a block-diagonal solution to the Riccati equation using 1) the comparison systems and 2) the direct approach using the systems matrices. We assume that $D =0$ and generate the matrices $A$, $B$ and $C$ randomly where $B$ and $C$ are block-diagonal, the block sizes $k_i$ are random integers between $2$ and $5$. For the comparison system approach, we first compute the scalars $\phi_{i j}$, $\gamma_{i l}$, $\eta_{k i}$ $e_i$, $d_i$, $g_k$, $f_l$ as described in the proof of Lemma~\ref{lem:ric-dist}. Then, we compute a positive-definite solution to the following Riccati equation
\begin{gather} \label{eq:RiccatiSolution}
P_i A_{i i} + A_{i i}^\transpose P_i + e_i/d_i I +P_i\left(\sum_{j \in \cI_i} A_{i j} A_{i j}^\transpose / \phi_{i j} + \sum_{l \in \cL_i} B_{i l} B_{i l}^\transpose/\gamma_{i l}\right) P_i = 0.
\end{gather}
According to Theorem~\ref{prop:comp-ma-hinf}, the solution to~\eqref{eq:RiccatiSolution} gives us a block-diagonal solution $P = \diag{P_1, \dots, P_n}$ to the Riccati inequality for the original system.

Figure~\ref{fig:resp_sdd} shows the computational time required for various number of subsystems $N$.
We note that all the computations for the comparison system approach can easily be parallelised, while the direct approach does not generally allow for parallelisation. As shown in Figure~\ref{fig:resp_sdd}, even without parallelisation, the comparison system approach scales significantly better with the number of subsystems taking at worse $77.3$ seconds to compute versus $92.2$ minutes for the direct approach. The computational results are obtained using Sedumi~\cite{Sedumi} and YALMIP~\cite{YALMIP} on a 4-core Intel i7 3GHz processor with 16GB of RAM. Due to heavy numerical computations for $200$ subsystems we compute every $\Hinf$ norm only once, however, the overall trend in these curves did not significantly change when we repeated the computations.

\begin{figure}[t]
	\centering
	\includegraphics[width = 0.5\columnwidth]{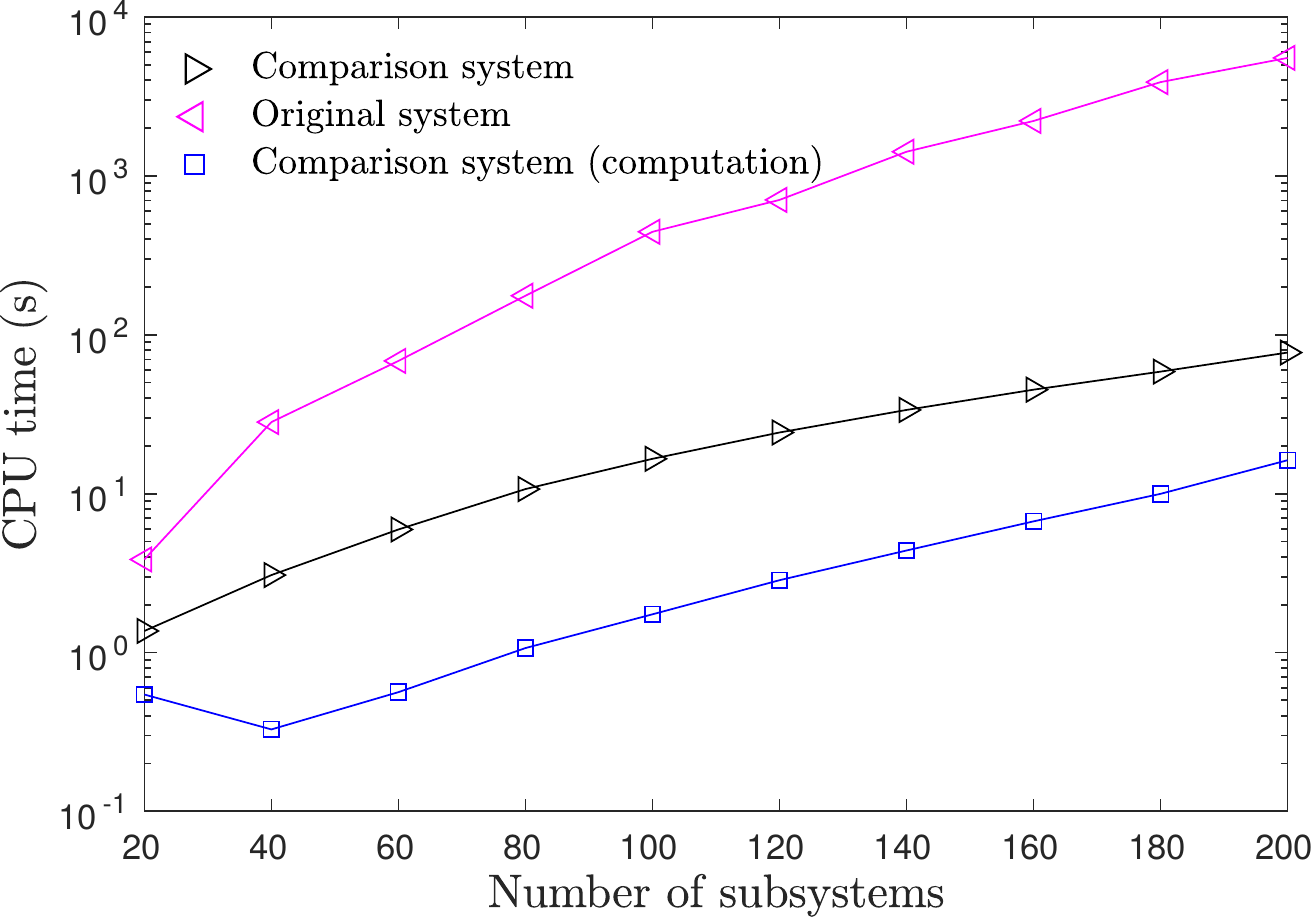}
	\caption{{Computational time comparison in Subsection~\ref{ss:time}. The black and magenta curves with triangle markers depict the total computational time for the comparison system and direct approaches, respectively. The black curve includes the computational time for the composition of the comparison system itself and the matrices $P_i$. The blue curve with square markers depicts the computational time for the Riccati equations and the scalars $e_i$, $d_i$, $g_k$, $f_l$, which are obtained using a linear program.}}
	\label{fig:resp_sdd}
\end{figure}

\subsection{Distributed Stability Tests} \label{add:stab-test}

In~\cite{sootla2017blocksdd}, it was proposed to use the comparison matrices and related Riccati equation in~\eqref{ric:dis-dis-test} to derive stability tests. In this note, we generalise these tests and let
\begin{gather*}
\Phi_{i,-i} = \diag{\begin{bmatrix}
	\phi_{i, 1} I& \cdots &\phi_{i, i-1} I &\phi_{i, i+1} I & \cdots &\phi_{i, n} I
	\end{bmatrix}},
\end{gather*}
where $\phi_{i j}$ with $i,j=1,\dots,n$ and $i\ne j$ are nonnegative scalars. The diagonal elements $\phi_{ii}$ are defined as follows:
\begin{gather*}
\phi_{i i} = \| (sI - A_{i i})^{-1} A_{i,-i} \tilde \Phi_{i,-i}^{-1/2}\|_\Hinf^{-2} +\varepsilon,
\end{gather*}
where $\varepsilon>0$, $\tilde \Phi_{i,-i}$ is obtained by removing all blocks from $\Phi_{i,-i}$ with with $j\not \in \cI_i$. Now we compose the matrix $F = \{\phi_{ij}\}_{i,j=1}^n$. If the matrix $F$ is Hurwitz then the matrix $A$ is Hurwitz and $\alpha$-diagonally stable. Note that stability of $F$ can also be verified in a distributed manner using the conditions in Proposition~\ref{prop:pos-stab} (see \cite{rantzer2015ejc} for details). Our stability tests are obtained by choosing appropriately $\phi_{i j}$; we present a few ad-hoc choices.

{\bf Test I.} The matrix $\cM^\alpha(A)$ is Hurwitz, which implies that $\phi_{i j} = \|(sI - A_{i i})^{-1} A_{i j}\|_{\Hinf} e_i/d_j$ for $j\ne i$, $\phi_{i i} = 1$ and the positive scalars $d_i$, $e_i$ are such that $\sum_{j = 1, j\ne i}^n \|(sI - A_{i i})^{-1} A_{i j}\|_{\Hinf} d_j < d_i$ and $\sum_{j = 1, j\ne i}^n \|(sI - A_{j j})^{-1} A_{j i}\|_{\Hinf} e_j < e_i$.

{\bf Test II.} $\phi_{i j} = \|(s I - A_{i i})^{-1} A_{i j}\|_\Hinf$ for $j\ne i$.

{\bf Test III.}  $\phi_{i j} = \overline{\sigma}(A_{i j})$ for $j\ne i$.

{\bf Test IV.} For $ j\ne i$
$\phi_{i j} = \begin{cases}
1 & \overline\sigma(A_{i j}) > 0, \\
0 & \overline\sigma(A_{i j}) = 0.
\end{cases}$

In Tests~I and~II, we need to compute at most $O(n^2)$ $\Hinf$ norms, while in Tests~III and~IV we need to compute at most $O(n^2)$ matrix norms and $n$ $\Hinf$ norms.
We note that if $\alpha = \{k_1,k_2\}$ then all the tests are equivalent. Note that there is no contradiction with~\cite{sootla2017blocksdd} since the presented tests are not equivalent to the tests in~\cite{sootla2017blocksdd}. However, in general, the set of matrices satisfying  Test~I,~II,~III~and~IV intersect without inclusions, which we show by providing examples. Consider the matrices $A_I$, $A_{II}$, $A_{III}$ and $A_{IV}$:
\begin{align*}
A_I &=\left[\begin{array}{cc|cc|cc}
-2 &    6 &    6  &   2 &    0  &   2\\
0  &  -8  &  -5   & -4  &   1   &  0\\
\hline
2  &  -1  & -12   & -8  &   0   &  2\\
1  &  -1  &  -5   & -6  &   1   &  1\\
\hline
0  &   1  &  -1   &  0  & -11   & -7\\
0  &   1  &   1   & -2  &  -9   &-10
\end{array}\right], \; & & 
A_{II} =\left[\begin{array}{cc|cc|cc}
-4 &    2 &   -1 &   -1  &   0 &   -1\\
9  & -16  &   3  &   8   & -1  &  -1\\
\hline
1  &  -1  &  -3  &  -1   &  1  &  -2\\
-1  &   1 &    4 &   -2  &  -2 &    1\\
\hline
-1  &   2 &    0 &    1 &   -9 &    4\\
-2  &   2 &   -1 &    0 &   -3 &   -4
\end{array}\right],\\
A_{III} &=\left[\begin{array}{cc|cc|cc}
-5  &   3 &   -1 &   -1 &   -1&    -1\\
9   &-14 &    8   &  1 &   -1 &    0\\
\hline
2   & -1 &   -7   & -7 &    0 &    1\\
1   & -1 &    4   & -9 &   -1 &    2\\
\hline
1   & -1 &    1   &  0 &    0 &    4\\
0   & -1 &   -1   &  1 &   -4 &   -5
\end{array}\right], \;
&& A_{IV} = \left[\begin{array}{cc|cc|cc}
-9 &    7&    -3 &    3&     1&     2\\
-6 &   -4&     2 &   -3&    -1&     0\\
\hline
-1 &   -1&    -2 &    5&     1 &    0\\
2  &   2 &   -4  &  -4 &   -2  &   1\\
\hline
2  &  -1 &    0  &   3 &   -9  &  -4\\
0  &   2 &    0  &   0 &    2  &  -7
\end{array}\right].
\end{align*}
Every matrix satisfies the test with the corresponding letter and fails the other ones. For instance, matrix $A_I$ satisfies Test~I and fails Test~II,~III and~IV. We note that it was harder to generate matrices failing either of Tests~I and~II, and satisfying any other test.

Note that all the tests also guarantee that a block-diagonal solution to Lyapunov inequality exists and can be constructed using Riccati equations similar to~\eqref{ric_actual}. Similar ideas can be used to derive other algebraic conditions for the existence of block-diagonal solutions to $\Hinf$ Riccati inequalities.

\section{Conclusion and Discussion} \label{s:con}
In this paper, we have considered a comparison system approach to the analysis of a class of systems. The comparison system is positive and can have a much lower dimension than the original system. If the comparison system is stable, then we can guarantee several strong properties of the original system: the existence of block-diagonal solutions to Lyapunov, Riccati inequalities; efficient, but conservative estimates of norms. {The gap between the set of systems admitting block-diagonal solutions to Lyapunov and Riccati inequalities and the set of systems satisfying our sufficient conditions is not entirely clear. In fact, a similar gap is not characterised in the more studied diagonal case either and still constitutes an interesting theoretical question. Nevertheless, our conditions are relatively easy to verify as they require only linear algebra and linear programming methods.} We present several examples illustrating our theoretical work. We provide additional, but minor theoretical results, as well as additional numerical examples in Appendix. 

Our definition of the comparison matrix results (under additional assumptions) in a decomposition of the Lyapunov and Riccati inequalities into a set of smaller LMIs. This decomposition is similar to the decomposition obtained by chordal sparsity~\cite{mason2014chordal, zheng2016admmdual}. However, there are a few crucial differences between these decompositions. The major one being that chordal decomposition cannot be applied to dense matrices, while diagonal dominance can. On the other hand, chordal decomposition provides necessary and sufficient conditions for the LMI to hold, which is not the case with diagonal dominance. Nevertheless, such a connection can potentially be exploited to derive efficient computational algorithms for large-scale system analysis using the techniques in~\cite{zheng2016admmdual,chordalZheng2018ecc} and the recently proposed methods in~\cite{sootla2019block, zheng2019block}.

Finally, some generalisations of our results were not discussed in the note due to their triviality. Instead of the $2$-norms in the definition of comparison matrix $\cM^\alpha(A)$, one can also use other $p$-induced norms for off-diagonal terms. However, we would need to compute the $p$-induced norms of the transfer matrices obtained by the matrices on the block-diagonal, which by itself is not an easy task in general unless $\alpha = \bfone$,  or the blocks $A_{ii}$ are Metzler.

\bibliography{biblio_bd}

\balance
\appendix

\section{Relating Comparison Systems}\label{app:comp-matrices}

In this appendix we present an explicit relationship between our new comparison matrix~\eqref{comp-sys-hard}, the optimisation-based comparison matrix obtained from Lemma~\ref{lem:ric-dist}, and the comparison matrix from~\cite{sootla2017blocksdd}.

\begin{defn}\label{def:block-comp-old}
	Given an $\alpha$-partitioned matrix $A$ with Hurwitz $A_{i i}$, we define the matrix $\widetilde\cM^\alpha(A)$ as follows:
    \begin{equation}
	   \widetilde \cM^\alpha_{ij}(A) = \begin{cases} -\|(sI - A_{i i})^{-1}\|_\Hinf^{-1} &  \text{if }i = j, \\
	 \|A_{i j}\|_2& \textrm{otherwise}.
	\end{cases} \label{block-comparison-old}
	\end{equation}
\end{defn}

Our second comparison matrix comes from contraction theory for nonlinear systems~\cite{russo2013contraction}.
\begin{defn}\label{def:block-comp-2}
	Given an $\alpha$-partitioned matrix $A$ with Hurwitz $A_{i i}$, we define the matrix $\cN^\alpha(A)$ as follows
	\begin{equation}
	\cN^\alpha_{ij}(A) = \begin{cases} \min\{\mu_2(A_{i i}), 0\} &  \text{if }i = j, \\
	\|A_{i j}\|_2 & \textrm{otherwise},
	\end{cases} \label{block-comparison-2}
	\end{equation}
	where $\mu_2(X) = \lim_{h\rightarrow 0+} \frac{1}{h} \left( \| I_k + h X \|_2 - 1\right) = \overline \lambda (X + X^\transpose)/ 2$ and $X\in\R^{k \times k}$, where $\overline\lambda(Z)$ denotes the maximum eigenvalue of a symmetric matrix $Z$.
\end{defn}

We also consider the class of matrices $A$ satisfying {Lemma~\ref{lem:ric-dist}, that is, the matrices} for which there exist $P_i\succ 0$, $\Psi_{i j}, \Phi_{i j}\succeq 0$ satisfying the following constraints:
\begin{subequations}
	\label{block-sdd-ric}
	\begin{align}
	&P_i A_{i i} + A_{i i}^\transpose P_i +\Phi_{i i} + \Psi_{i i} \prec 0~\forall i,\\
	& \begin{bmatrix}
	\Psi_{i j}       & P_i A_{i j} \\
	A_{i j}^\transpose P_i & \Phi_{j i}
	\end{bmatrix} \succeq 0~\forall i\ne j,\\
	&\Phi_{i i} \succ \sum\limits_{j = 1, j \ne i}^n \Phi_{j i},\,\,\Psi_{i i} \succ \sum\limits_{j = 1, j \ne i}^n \Psi_{i j}~\forall i.
	\end{align}
\end{subequations}

In order to formalise how these comparison matrices are related, we define the following classes of matrices
\begin{align*}
\cC^\alpha_0 &= \{A\in \R^{N\times N} \Bigl| \cN^\alpha(A) \text{ is Hurwitz} \},\\
\cC^\alpha_1 &= \{A\in \R^{N\times N} \Bigl| \widetilde \cM^\alpha(A) \text{ is Hurwitz} \}, \\
\cC^\alpha_2 &= \{A\in \R^{N\times N} \Bigl| \cM^\alpha(A) \text{ is Hurwitz} \},\\
\cC^\alpha_3 &= \{A\in \R^{N\times N} \Bigl| \exists P_i\succ 0, \Psi_{i j}, \Phi_{i j}\succeq 0: A \text{ satisfies~\eqref{block-sdd-ric}} \},\\
\cC^\alpha_4 &= \{A\in \R^{N\times N} \Bigl| A \text{ is $\alpha$-diagonally stable} \}.
\end{align*}
\begin{prop}\label{prop:comp-mat-rel} We have the following inclusions:
	\begin{gather*}
	\cC^\alpha_0 \subseteq
	\cC^\alpha_1 \subseteq \cC^\alpha_2 \subseteq \cC^\alpha_3\subseteq \cC^\alpha_4.
	\end{gather*}
	for a given $\alpha$. If $\alpha = \bfone$, then 
	 $\cC^\alpha_0 = \cC^\alpha_1 = \cC^\alpha_2 = \cC^\alpha_3$.
\end{prop}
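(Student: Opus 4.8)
The plan is to establish the chain of inclusions one link at a time, in each case exhibiting certificates (scaling vectors, or block scalar matrices) for the stronger property from certificates for the weaker one, and to prove the collapse for $\alpha = \bfone$ by noting that all four comparison matrices coincide when every block is a scalar.

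First I would handle $\cC^\alpha_0 \subseteq \cC^\alpha_1$. Both $\cN^\alpha(A)$ and $\widetilde\cM^\alpha(A)$ are Metzler matrices with the same off-diagonal entries $\|A_{ij}\|_2$, so it suffices to compare the diagonal entries and invoke the monotonicity of the Hurwitz property for Metzler matrices (a corollary of Proposition~\ref{prop:pos-stab}: if $F \le F'$ entrywise, $F'$ Metzler and Hurwitz, then $F$ is Hurwitz, using the certificate $d$ from condition~(b)). Concretely I would show $\mu_2(A_{ii}) \le -\|(sI - A_{ii})^{-1}\|_\Hinf^{-1}$ for Hurwitz $A_{ii}$; this is the standard bound relating the logarithmic norm to the $\Hinf$ norm of the resolvent, i.e. $\|(sI-A_{ii})^{-1}\|_\Hinf \le 1/(-\mu_2(A_{ii}))$ when $\mu_2(A_{ii})<0$, which follows from $\|e^{A_{ii}t}\|_2 \le e^{\mu_2(A_{ii})t}$. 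Hence $\cN^\alpha(A) \le \widetilde\cM^\alpha(A)$ entrywise, giving the inclusion. Next, $\cC^\alpha_1 \subseteq \cC^\alpha_2$: again both matrices are Metzler with diagonal $-1$ after a trivial positive diagonal rescaling, or more directly I compare $\widetilde\cM^\alpha$ and $\cM^\alpha$ — note $\cM^\alpha_{ij} = \|(sI-A_{ii})^{-1}A_{ij}\|_\Hinf \le \|(sI-A_{ii})^{-1}\|_\Hinf \|A_{ij}\|_2 = -\widetilde\cM^\alpha_{ii}(A)^{-1}\widetilde\cM^\alpha_{ij}(A)$, so that after scaling row $i$ of $\widetilde\cM^\alpha(A)$ by $\|(sI-A_{ii})^{-1}\|_\Hinf$ (a positive diagonal similarity that preserves the Hurwitz property and Metzler structure) we dominate $\cM^\alpha(A)$ entrywise, and the monotonicity argument applies.

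The link $\cC^\alpha_2 \subseteq \cC^\alpha_3$ is essentially Lemma~\ref{lem:ric-dist} specialised to the case $B = C = D = 0$ (equivalently $n_i = n_o = 0$): if $\cM^\alpha(A) = F$ is Hurwitz, Proposition~\ref{prop:pos-stab} yields positive $d_i, e_i$ certifying the inequalities, and setting $\Phi_{ij} = \phi_{ij}I = |F_{ij}|(e_i/d_j)I$, $\Psi_{ij} = \eta_{ij}I$ with the analogous scalar construction, together with the $P_i$ solving the per-block Riccati equation~\eqref{ric_actual}, verifies~\eqref{block-sdd-ric}; the Schur complement in~\eqref{block-sdd-ric}(b) reproduces exactly the $\Hinf$ bound~\eqref{dis_actual} that was checked in the proof of the lemma. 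Then $\cC^\alpha_3 \subseteq \cC^\alpha_4$: given a feasible point of~\eqref{block-sdd-ric}, sum the matrix inequality~\eqref{block-sdd-ric}(b) over $j \ne i$, use~\eqref{block-sdd-ric}(c) to absorb the off-diagonal couplings, and apply a Schur complement to collapse the $\Psi, \Phi$ terms — this is precisely the aggregation performed in the proof of Theorem~\ref{prop:comp-ma-hinf} (restricted to the Lyapunov case), and it yields $PA + A^\transpose P \prec 0$ with $P = \diag{P_1,\dots,P_n} \succ 0$, i.e. $\alpha$-diagonal stability.

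Finally, for $\alpha = \bfone$ every block $A_{ii}$ is a scalar $a_{ii}$ (Hurwitz means $a_{ii}<0$), and one computes directly $\mu_2(a_{ii}) = a_{ii}$, $\|(sI - a_{ii})^{-1}\|_\Hinf = 1/|a_{ii}| = -1/a_{ii}$, and $\|(sI - a_{ii})^{-1}a_{ij}\|_\Hinf = |a_{ij}|/|a_{ii}|$; after the positive diagonal rescaling by $|a_{ii}|$ the matrices $\cN^\bfone(A)$, $\widetilde\cM^\bfone(A)$ and $\cM^\bfone(A)$ all coincide with the classical Metzler comparison matrix of $A$, so $\cC^\bfone_0 = \cC^\bfone_1 = \cC^\bfone_2$. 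The equality $\cC^\bfone_2 = \cC^\bfone_3$ then follows because for scalar blocks the SDP~\eqref{block-sdd-ric} reduces to a scalar LP that is feasible exactly when the comparison matrix is Hurwitz (by Proposition~\ref{prop:pos-stab}), closing the loop.

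I expect the main obstacle to be the first inclusion $\cC^\alpha_0 \subseteq \cC^\alpha_1$, specifically making the inequality $\mu_2(A_{ii}) \le -\|(sI-A_{ii})^{-1}\|_\Hinf^{-1}$ airtight: it requires the resolvent estimate $\|(sI-A)^{-1}\| \le \int_0^\infty \|e^{At}\|\,dt \le \int_0^\infty e^{\mu_2(A)t}\,dt = -1/\mu_2(A)$ valid on the imaginary axis, together with the subtlety that $\mu_2(A_{ii})$ may be $\ge 0$ even when $A_{ii}$ is Hurwitz — but in that case $\cN^\alpha_{ii}(A) = 0$ and the argument must instead observe that a Metzler matrix with a zero diagonal entry and any nonnegative off-diagonal entries in that row cannot be Hurwitz unless the row is entirely zero, so $A \in \cC^\alpha_0$ forces $\mu_2(A_{ii})<0$ for each $i$ with nonzero off-diagonal coupling, and the estimate then applies. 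The remaining links are routine reorganisations of material already in the proofs of Lemma~\ref{lem:ric-dist} and Theorem~\ref{prop:comp-ma-hinf}.
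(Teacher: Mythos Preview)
Your overall approach mirrors the paper's for every link of the chain. The only substantive methodological difference is in $\cC^\alpha_0 \subseteq \cC^\alpha_1$: to obtain $\|(sI-A_{ii})^{-1}\|_\Hinf \le -1/\mu_2(A_{ii})$ you integrate the logarithmic-norm estimate $\|e^{A_{ii}t}\|_2 \le e^{\mu_2(A_{ii})t}$ along the imaginary axis, whereas the paper verifies the Riccati inequality $PA_{ii}+A_{ii}^\transpose P + (\mu_2(A_{ii}))^2 I + P^2 \preceq 0$ directly with the scalar choice $P = -\mu_2(A_{ii})I$ and invokes the bounded real lemma. Both arguments are valid and yield the same entrywise comparison $\cN^\alpha(A) \ge \widetilde\cM^\alpha(A)$; yours is the more classical semigroup route, the paper's is a self-contained algebraic check. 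Your treatment of the edge case $\mu_2(A_{ii}) \ge 0$ is more elaborate than needed and contains a slip: a Metzler matrix with a zero diagonal entry is \emph{never} Hurwitz (even if the entire row vanishes one gets a zero eigenvalue), so the paper's one-line observation that $\cN^\alpha(A)$ Metzler and Hurwitz forces every diagonal entry strictly negative, hence $\mu_2(A_{ii}) < 0$, suffices.

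On the collapse $\cC^\bfone_2 = \cC^\bfone_3$: your justification that ``the SDP~\eqref{block-sdd-ric} reduces to a scalar LP'' is not correct as stated --- in the scalar case the $2\times 2$ constraint~\eqref{block-sdd-ric}(b) becomes $\psi_{ij}\phi_{ji} \ge p_i^2 a_{ij}^2$, which is bilinear, not linear, so Proposition~\ref{prop:pos-stab} does not apply directly and the reverse inclusion $\cC^\bfone_3 \subseteq \cC^\bfone_2$ still requires an argument. In fairness the paper's own part~(iv) only establishes $\cC^\bfone_0 = \cC^\bfone_1 = \cC^\bfone_2$ explicitly and is equally silent on $\cC^\bfone_3 \subseteq \cC^\bfone_2$, so you are not behind the paper here; but you should not present the LP reduction as though it closes the loop.
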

\begin{proof}(i) Since $\widetilde \cM^\alpha(A)$ is Hurwitz there exist positive scalars $d_i$ such that

\begin{gather*}
\|(s I - A_{i i})^{-1}\|_\Hinf^{-1} d_i > \sum\limits_{j = 1, j \ne i}^n \| A_{i j}\|_2 d_j.
\end{gather*}
Now it is straightforward to get:
\begin{equation}\label{ineq:comp-matr-inclusion}
    \begin{aligned}
    d_i > \sum\limits_{j = 1, j \ne i}^n \|(s I - A_{i i})^{-1}\|_\Hinf \| A_{i j}\|_2 d_j 
       \ge  \sum\limits_{j = 1, j \ne i}^n \|(s I - A_{i i})^{-1} A_{i j}\|_\Hinf  d_j,
 \end{aligned}
\end{equation}
therefore $\cM^\alpha(A)$ is Hurwitz.

(ii) The inclusions $\cC^\alpha_2 \subseteq \cC^\alpha_3\subseteq \cC^\alpha_4$ follow from Theorem~\ref{prop:comp-ma-hinf}.

(iii) Since $\cN^\alpha(A)$ is Metzler and Hurwitz, then $\mu_2(A_{i i}) < 0$. We can show that with  $P = -\mu_2(A_{i i}) I$, the following inequality holds:
\begin{gather*}
P A_{i i} + A_{i i}^\transpose P + (\mu_2(A_{i i}))^2 I + P^2 \preceq 0.
\end{gather*}
Indeed, using the inequality $A_{i i} + A_{i i}^\transpose \preceq 2\mu_2(A_{i i}) I$, we get
\[
\begin{aligned}
-(A_{i i} + A_{i i}^\transpose)\mu_2(A_{i i})  + (\mu_2(A_{i i}))^2 I + (\mu_2(A_{i i}))^2 I 
\preceq - 2(\mu_2(A_{i i}))^2 I  + (\mu_2(A_{i i}))^2 I + (\mu_2(A_{i i}))^2 I 
= 0.
\end{aligned}
\]

Therefore, $\|(s I - A_{i i})^{-1}\|_\Hinf \le  -1/\mu_2(A_{i i})$, where the equality is attained, \emph{e.g.}, for scalar $A_{i i}$. This leads to $\cN^\alpha(A) \ge \widetilde \cM^\alpha(A)$. Since $\cN^\alpha(A)$ is Hurwitz, there exists $v \gg 0$ such that $\cN^\alpha(A) v \ll 0$. Due to $\cN^\alpha(A) \ge \widetilde \cM^\alpha(A)$ the same vector $v$ can be used to show stability of $\widetilde \cM^\alpha(A)$.

{(iv) In the trivial partitioning case,  $\cN^\bfone(A) = \widetilde \cM^\bfone(A)$. Furthermore, the nonstrict inequality in~\eqref{ineq:comp-matr-inclusion} becomes an equality. Therefore, if the matrix $\cM^\bfone(A)$ is Hurwitz, so is the matrix $\widetilde \cM^\bfone(A)$.}
\end{proof}

Since in the trivial partitioning case the comparison matrices are equivalent, the comparison matrix $\cN^\alpha(A)$ can also be seen as a block-generalisation of scaled diagonal dominance.

Finally, using Proposition~\ref{prop:comp-mat-rel} we have the following implication from Theorem~\ref{prop:comp-ma-hinf}.
\begin{cor} \label{Cor:Comparsions}
	The results of Lemma~\ref{lem:ric-dist} and Theorem~\ref{prop:comp-ma-hinf} hold if we set $\widetilde F = \widetilde \cM^\alpha(A)$
	(or $\widetilde F = \cN^\alpha(A)$)
	and $\widetilde G_{i l} = \|B_{i l}\|_2$, while $H$ and $J$ defined in the same way. In particular, if
$$\|H(s I - \widetilde F)^{-1} \widetilde G + J\|_\Hinf <\delta, $$
 then $\|C(s I-A)^{-1} B + D\|_\Hinf <\delta$ and there exist $P_i \succ 0$ such that~\eqref{ric-hinf} holds with $P = \diag{P_1, \dots, P_n}$.
\end{cor}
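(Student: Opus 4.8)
The plan is to reduce Corollary~\ref{Cor:Comparsions} to Theorem~\ref{prop:comp-ma-hinf} and Lemma~\ref{lem:ric-dist} applied to the \emph{original} comparison system~\eqref{comp-sys-hard}, by showing that the hypotheses stated for the coarser comparison system $(\widetilde F,\widetilde G,H,J)$ (with $\widetilde F=\widetilde\cM^\alpha(A)$ or $\widetilde F=\cN^\alpha(A)$ and $\widetilde G_{il}=\|B_{il}\|_2$) imply the corresponding hypotheses for $(\cM^\alpha(A),G,H,J)$. The bridge is submultiplicativity of the $\Hinf$ and spectral norms together with the certificate characterisation of positive-system $\Hinf$ bounds in Proposition~\ref{prop:pos-hinf}(d), which is monotone under entrywise domination of the (Metzler) state matrix and the (nonnegative) input matrix.

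First I would record the entrywise comparison. Set $\tau_i=\|(sI-A_{ii})^{-1}\|_\Hinf$, which is finite and strictly positive since $A_{ii}$ is Hurwitz, and let $D_\tau=\diag{\tau_1,\dots,\tau_n}\succ0$. From $\cM^\alpha_{ij}(A)=\|(sI-A_{ii})^{-1}A_{ij}\|_\Hinf\le\tau_i\|A_{ij}\|_2$, from $G_{il}=\|(sI-A_{ii})^{-1}B_{il}\|_\Hinf\le\tau_i\|B_{il}\|_2$, and from $\cM^\alpha_{ii}(A)=-1=\tau_i\cdot(-\tau_i^{-1})$, one obtains $\cM^\alpha(A)\le D_\tau\widetilde\cM^\alpha(A)$ and $G\le D_\tau\widetilde G$ entrywise (this is essentially~\eqref{ineq:comp-matr-inclusion}), with the diagonals of $\cM^\alpha(A)$ and $D_\tau\widetilde\cM^\alpha(A)$ coinciding and both matrices Metzler. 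For the $\cN^\alpha(A)$ variant I would instead use $\cN^\alpha(A)\ge\widetilde\cM^\alpha(A)$ entrywise, established inside the proof of Proposition~\ref{prop:comp-mat-rel}(iii); since the input matrix $\widetilde G$ is common to both, no rescaling is needed there and the argument below for $\widetilde\cM^\alpha$ applies verbatim after this reduction.

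Next I would transfer the feasibility certificate. Assuming $\widetilde F=\widetilde\cM^\alpha(A)$ is Hurwitz and $\|H(sI-\widetilde F)^{-1}\widetilde G+J\|_\Hinf<\delta$, Proposition~\ref{prop:pos-hinf}(d) supplies $d,e\in\Rp^n$, $g\in\Rnn^{n_o}$, $f\in\Rp^{n_i}$ satisfying~\eqref{cond-pos-hinf-1}--\eqref{cond-pos-hinf-2} for $(\widetilde F,\widetilde G,H,J)$. I claim $d'=d$, $f'=f$, $g'=g$, $e'=D_\tau^{-1}e\in\Rp^n$ satisfy the same inequalities for $(\cM^\alpha(A),G,H,J)$: indeed $\cM^\alpha(A)d'+Gf'\le D_\tau(\widetilde F d+\widetilde G f)<0$ since $d',f'\ge0$ and $D_\tau\succ0$ is diagonal; $Hd'+Jf'=Hd+Jf\le g'$ is unchanged; $\cM^\alpha(A)^\transpose e'+H^\transpose g'\le \widetilde F^\transpose D_\tau e'+H^\transpose g=\widetilde F^\transpose e+H^\transpose g<0$, using $\cM^\alpha(A)^\transpose\le\widetilde F^\transpose D_\tau$ entrywise and $e'\ge0$; and likewise $G^\transpose e'+J^\transpose g'\le \widetilde G^\transpose D_\tau e'+J^\transpose g=\widetilde G^\transpose e+J^\transpose g<\delta^2 f=\delta^2 f'$. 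By the converse implication in Proposition~\ref{prop:pos-hinf}(d), $\cM^\alpha(A)$ is Hurwitz (also directly, since $-\cM^\alpha(A)d'>0$ and Proposition~\ref{prop:pos-stab} applies) and $\|H(sI-\cM^\alpha(A))^{-1}G+J\|_\Hinf<\delta$.

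Finally I would close the argument by applying Theorem~\ref{prop:comp-ma-hinf} and Lemma~\ref{lem:ric-dist} to system~\eqref{eq:system-full} with its comparison system~\eqref{comp-sys-hard}, now known to be stable with $\Hinf$ norm below $\delta$: this yields $\|C(sI-A)^{-1}B+D\|_\Hinf<\delta$ and block-diagonal $P=\diag{P_1,\dots,P_n}\succ0$ solving~\eqref{ric-hinf}, together with the matrix inequalities asserted by Lemma~\ref{lem:ric-dist}. I expect the only delicate point to be the bookkeeping with $D_\tau$ in the transpose inequalities of the third step — keeping straight on which side of $\cM^\alpha(A)^\transpose$ and $G^\transpose$ the scaling acts — but this is routine once the entrywise bounds of the first step and the characterisation~\eqref{cond-pos-hinf-1}--\eqref{cond-pos-hinf-2} are available; every other step is a direct invocation of already-proved results.
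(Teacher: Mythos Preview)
Your argument is correct and is essentially the intended one: the paper's own proof is a single sentence invoking Proposition~\ref{prop:comp-mat-rel} and Theorem~\ref{prop:comp-ma-hinf}, and what you have written is precisely the natural way to make that sentence rigorous. The entrywise bounds $\cM^\alpha(A)\le D_\tau\widetilde\cM^\alpha(A)$, $G\le D_\tau\widetilde G$ are exactly the content of~\eqref{ineq:comp-matr-inclusion} extended to the input matrix, and pushing the LP certificate~\eqref{cond-pos-hinf-1}--\eqref{cond-pos-hinf-2} through these bounds (with the rescaling $e'=D_\tau^{-1}e$) is the clean way to transfer the $\Hinf$ hypothesis to the finer comparison system before invoking Theorem~\ref{prop:comp-ma-hinf}; your handling of Hurwitzness of $\cM^\alpha(A)$ via Proposition~\ref{prop:pos-stab} (noting $Gf'\ge0$ so $\cM^\alpha(A)d'<0$) correctly addresses the fact that Proposition~\ref{prop:pos-hinf} takes Hurwitzness as a standing assumption rather than part of the equivalence.
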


\section{Bounds on the Outputs and States}\label{app:cs_n}
Using the matrix $\cN^\alpha$, we can define another comparison system
\begin{equation}
F = \cN^\alpha(A ), G_{i l} = \|B_{i l} \|_2,
H_{k j} = \| C_{k j} \|_2, J_{k l} = \|D_{k l}\|_2.\label{comp-sys-simple}
\end{equation}
As we will show below the comparison matrix $\cN^\alpha$ provides additional relations between the original system~\eqref{eq:system-full} and the comparison system~\eqref{eq:system-comparison}.

\subsection{Main Results}
As shown in Corollary~\ref{Cor:Comparsions}, the result of Theorem~\ref{prop:comp-ma-hinf} applies to the case of a comparison system with the matrix $\cN^\alpha$.
Moreover, this case opens the door of exploiting additional properties of the comparison systems. For instance, it is possible to bound the state and the output of system~\eqref{eq:system-full} using its comparison system.

\begin{thm} \label{thm:comp-prin}
	Consider system~\eqref{eq:system-full} with $x(0)= x^0$ and its comparison system~\eqref{eq:system-comparison} defined in~\eqref{comp-sys-simple} with $F= \cN^\alpha(A )$, $\xi_i(0) = \xi^0_i = \| x_i^0\|_2$ and $\upsilon_l(t) = \|u_l(t)\|_2$ with $i=1,\dots, n$, $l=1,\dots,n_i$. Then for all $i=1,\dots, n$, $k=1,\dots,n_o$:
	\begin{gather*}
	\|x_i(t)\|_2 \le \xi_i(t),\,\,\,\| y_k(t) \|_2 \le \nu_k(t), \,\,\forall t \ge 0.
	\end{gather*}
\end{thm}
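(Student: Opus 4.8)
The plan is to prove the state bound $\|x_i(t)\|_2 \le \xi_i(t)$ first by a differential-inequality (comparison lemma) argument applied componentwise to the partitioned state, and then read off the output bound as an immediate consequence. First I would write the $i$-th block of the state dynamics as $\dot x_i = A_{ii} x_i + \sum_{j\ne i} A_{ij} x_j + \sum_l B_{il} u_l$, and compute the upper Dini derivative of $t\mapsto \|x_i(t)\|_2$. Using the standard bound $D^+\|x_i\|_2 \le \mu_2(A_{ii})\|x_i\|_2 + \sum_{j\ne i}\|A_{ij}\|_2\|x_j\|_2 + \sum_l\|B_{il}\|_2\|u_l\|_2$ (which follows from the definition of the logarithmic norm $\mu_2$ recalled in Definition~\ref{def:block-comp-2}, together with submultiplicativity and the triangle inequality), and noting that $\mu_2(A_{ii}) \le \min\{\mu_2(A_{ii}),0\} =: \cN^\alpha_{ii}(A)$ only when $\mu_2(A_{ii})\le 0$, I would actually keep $\mu_2(A_{ii})$ if it is positive — but since we only need an \emph{upper} comparison system and $\min\{\mu_2(A_{ii}),0\}\ge$ is the wrong direction, I instead observe that replacing $\mu_2(A_{ii})$ by $\max\{\mu_2(A_{ii}),\cdot\}$ is not needed: the point is that $\cN^\alpha_{ii}(A) \le \mu_2(A_{ii})$ would break the bound, so I must be careful — the correct reading is that $\mu_2(A_{ii}) \le \cN^\alpha_{ii}(A)$ fails in general, hence the comparison system with $F=\cN^\alpha(A)$ gives a \emph{valid upper bound} precisely because when $\mu_2(A_{ii})>0$ the true decay is even worse, but the theorem's hypothesis that the comparison system is used for bounding requires $F$ Metzler and the off-diagonal terms dominate; I would therefore phrase it as: $D^+\|x_i\|_2 \le \cN^\alpha_{ii}(A)\|x_i\|_2 + \sum_{j\ne i}\cN^\alpha_{ij}(A)\|x_j\|_2 + \sum_l \|B_{il}\|_2\|u_l\|_2$ holds whenever $\mu_2(A_{ii})\le 0$, and note (as the paper does in Prop.~\ref{prop:comp-mat-rel}(iii)) that Hurwitz-ness of the Metzler matrix $\cN^\alpha(A)$ forces $\mu_2(A_{ii})<0$, so this is not a restriction.

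The second step is the comparison-lemma conclusion. Let $v(t) = (\|x_1(t)\|_2,\dots,\|x_n(t)\|_2)^\transpose$ and $\xi(t)$ the solution of $\dot\xi = \cN^\alpha(A)\xi + G\upsilon$ with $\xi(0)=v(0)$ and $\upsilon_l = \|u_l\|_2$. From Step~1, $D^+ v(t) \le \cN^\alpha(A) v(t) + G\upsilon(t)$ (entrywise), while $\xi$ satisfies the same relation with equality. Because $\cN^\alpha(A)$ is Metzler, the flow of the linear system $\dot z = \cN^\alpha(A) z + G\upsilon$ is monotone (order-preserving) with respect to the componentwise order on $\R^n$; this is exactly the property that makes positive linear systems comparison systems. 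Hence $v(t)\le\xi(t)$ for all $t\ge 0$, i.e. $\|x_i(t)\|_2\le\xi_i(t)$. I would cite the standard monotone-comparison result (e.g. the Metzler-matrix version of Kamke's theorem / the comparison lemma for cooperative systems) rather than reprove it.

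The third step, the output bound, is then direct: $\|y_k(t)\|_2 = \|\sum_j C_{kj} x_j(t) + \sum_l D_{kl} u_l(t)\|_2 \le \sum_j \|C_{kj}\|_2 \|x_j(t)\|_2 + \sum_l \|D_{kl}\|_2\|u_l(t)\|_2 \le \sum_j H_{kj}\xi_j(t) + \sum_l J_{kl}\upsilon_l(t) = \nu_k(t)$, using Step~2 and the definitions $H_{kj}=\|C_{kj}\|_2$, $J_{kl}=\|D_{kl}\|_2$ from~\eqref{comp-sys-simple}.

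The main obstacle is Step~1: getting the Dini-derivative inequality with the logarithmic norm $\mu_2(A_{ii})$ in the right place, and in particular justifying cleanly that using $\cN^\alpha_{ii}(A)=\min\{\mu_2(A_{ii}),0\}$ on the diagonal still yields a valid \emph{upper} bound. The resolution is that we are only ever in the regime where $\cN^\alpha(A)$ is Hurwitz (otherwise the comparison system does not bound anything), and a Hurwitz Metzler matrix has negative diagonal, so $\mu_2(A_{ii})<0$ and $\cN^\alpha_{ii}(A)=\mu_2(A_{ii})$ exactly; the $\min\{\cdot,0\}$ is merely cosmetic insurance for the definition and never binds under the theorem's hypotheses. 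Everything downstream (monotonicity of the Metzler flow, the norm estimates for the output) is routine.
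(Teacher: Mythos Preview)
Your strategy is correct and parallels the paper's, but with different technical packaging. The paper works with $\|x_i\|_2^2$ rather than $\|x_i\|_2$: it establishes the pointwise bound
\[
\sum_j x_i^\transpose A_{ij}x_j+\sum_l x_i^\transpose B_{il}u_l<(F_{ii}+\varepsilon)\|x_i\|_2^2+\sum_{j\ne i}F_{ij}\|x_i\|_2\|x_j\|_2+\sum_l G_{il}\|x_i\|_2\|u_l\|_2
\]
for small $\varepsilon>0$, compares against the perturbed system $\dot\xi^\varepsilon=(F+\varepsilon I)\xi^\varepsilon+G\upsilon$ via a self-contained first-crossing-time contradiction (no index $i$ can attain $\|x_i(s)\|_2=\xi_i^\varepsilon(s)$ for the first time), and then lets $\varepsilon\to0$. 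Your route---Dini derivative of $\|x_i\|_2$ plus an off-the-shelf Kamke/monotone comparison lemma for Metzler systems---is shorter but less self-contained; the paper's squared-norm $\varepsilon$-perturbation argument also sidesteps the non-differentiability of $t\mapsto\|x_i(t)\|_2$ at the origin. Your Step~3 (the output bound) is identical to the paper's.

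Your worry about $\cN^\alpha_{ii}=\min\{\mu_2(A_{ii}),0\}$ is well founded. Your eventual resolution---that Hurwitzness of the Metzler matrix $\cN^\alpha(A)$ forces negative diagonal and hence $\mu_2(A_{ii})<0$, so the $\min$ never binds---is sound reasoning, but the theorem as stated does not assume $\cN^\alpha(A)$ Hurwitz, and the paper's own proof shares exactly the same gap (its displayed inequality above fails for small $\varepsilon$ when $\mu_2(A_{ii})>0$). So you have not introduced any new error; rather, you have identified a genuine subtlety: both arguments need $\mu_2(A_{ii})\le 0$ for all $i$ to go through, which is automatic under the Hurwitz hypothesis on $\cN^\alpha(A)$ used everywhere downstream (e.g.\ Corollary~\ref{cor:lyap-fun}).
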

\begin{proof}
	For all $i$, $x_i \in \R^{k_i}$, $x_j\in \R^{k_j}$ and  a small $\varepsilon>0$, we have 	
	\begin{equation}\label{ineq:comp-block}
	\sum\limits_{j = 1}^n x_i^\transpose A_{i j} x_j + \sum\limits_{l =1}^{n_i} x_i^\transpose B_{i l} u_l < (F_{i i} + \varepsilon) \|x_i \|_2^2 
	 + \sum\limits_{j = 1, j \ne i}^n F_{i j} \|x_i\|_2 \|x_j\|_2 + 	\sum\limits_{l = 1}^{n_i} G_{i l} \| x_i\|_2 \|u_l\|_2,
	\end{equation}
	where we use the bounds
$$z^\transpose X z = z^\transpose (X + X^\transpose) z/2 \le -\mu_2(X) z^\transpose z = -\mu_2(X) \|z\|_2^2,$$
and
$$z^\transpose Y y \le \|Y\|_2 \|z\|_2 \|y\|_2 $$
for all vectors $z$, $y$ and matrices $X$, $Y$ of appropriate dimensions.

\begin{equation} \label{prop4:contr-cond1}
    \begin{aligned}
	\frac{1}{2} \frac{d (\xi_i^\varepsilon(t))^2}{dt } &= (F_{i i} + \varepsilon )\|x_i(t)\|_2^2 + \sum\limits_{j = 1, j\ne i}^n  F_{i j} \xi_i^\varepsilon(t) \xi_j^\varepsilon(t)  +
	\sum\limits_{l = 1}^{n_i} G_{i l} \xi_i^\varepsilon(t) \upsilon_l   \\
&= (F_{i i} + \varepsilon ) \|x_i(t)\|_2^2 +\sum\limits_{j = 1, j\ne i}^n F_{i j} \| x_i(t)\|_2 \xi_j^\varepsilon(t) +  \sum\limits_{l = 1}^{n_i} G_{i l} \| x_i(t)\|_2 \|u_l\|_2 \\
 &\ge
	(F_{i i} + \varepsilon ) \| x_i(t)\|_2^2
	+ \sum_{j = 1, j\ne i}^n F_{i j} \| x_i(t)\|_2 \|x_j(t)\|_2 + 	\sum\limits_{l = 1}^{n_i} G_{i l} \| x_i(t)\|_2 \|u_l\|_2.
	\end{aligned}
\end{equation}

Consider now the system $\dot \xi = (F+\varepsilon I) \xi + G \upsilon$ and let $\xi^\varepsilon(t)$ denote its solution with $\xi_i^\varepsilon(0) = \|x_i^0\|_2$. Using~\eqref{ineq:comp-block}, we get
\[
\frac{d \|x_i(t)\|_2^2}{d t}\Bigl|_{t =0} < \frac{d(\xi_i^\varepsilon(t))^2}{d t} \Bigl|_{t =0},
\]
which means that there exists $T>0$ such that $\|x_i(t)\|_2 < \xi_i^\varepsilon(t)$ for all $t\in[0,~T]$ and all $i$.
	
	Let there exist $s$ such that $\|x_j(\sigma)\|_2 < \xi_j^\varepsilon(\sigma)$ for all $\sigma \in [0, s)$ and all $j$, however,
	$\|x_j(s)\|_2 \le  \xi_j^\varepsilon(s)$	for all $j$ and there exists an index $i$ such that $\|x_i(s)\|_2 =  \xi_i^\varepsilon(s)$. This implies that
	\begin{equation}\label{prop4:contr-cond}
	0\ge\frac{d ( (\xi_i^\varepsilon(t))^2 - \| x_i(t)\|_2^2) }{d t}\Bigl|_{t = s}.
	\end{equation}
	On the other hand, the inequality shown in~\eqref{prop4:contr-cond1} leads to
\[
	\frac{1}{2}\frac{d ((\xi_i^\varepsilon(t))^2 - \|x_i(t)\|_2^2)}{d t}\Bigl|_{t = s}  >0.
\]	
	We arrived to the contradiction with~\eqref{prop4:contr-cond}. Therefore, $\xi_i^\varepsilon(t) > \| x_i(t)\|$ for all $t> 0$ and all $i$. Letting $\varepsilon\rightarrow 0$ we get $\xi_i(t) \ge \| x_i(t)\|_2$ for all $t> 0$ and all $i$.
	
	Now we will show the second part of the statement. According to triangle and Cauchy-Schwartz inequalities we have
\[
    \begin{aligned}
	\| y_k(t) \|_2 =  \left\| \sum\limits_{j = 1}^{n} C_{k j} x_j(t) + \sum\limits_{l = 1}^{n_i} D_{k l} u_l \right\|_2  
                  & \le \sum\limits_{j = 1}^n \left\| C_{k j}\right\|_2 \left\|x_k(t)\right\|_2  + \sum\limits_{l = 1}^{n_i} \|D_{k l}\|_2 \|u_l\|_2  \\
                  &\le 	\sum\limits_{j = 1}^n H_{k j} \xi_j + \sum\limits_{l = 1}^{n_i} J_{k l} \upsilon_l 
                  = \nu_k(t),
	\end{aligned}
\]
which completes the proof.
\end{proof}

Using the flow bounds we can use stability analysis tools for positive systems such as $\dot \xi = F \xi$ to study nonpositive systems such as $\dot x = A x$.

\begin{cor} \label{cor:lyap-fun}
	Consider the system~\eqref{eq:system-full} and its comparison system~\eqref{eq:system-comparison} defined in~\eqref{comp-sys-simple} with $F= \cN^\alpha(A)$.	Let $F$ be Hurwitz and let the vectors $d$, $e\in\Rp^n$ be such that  $-F d$, $ - e^\transpose F \in \Rp^n$, then
	\[
    \begin{aligned}
	   V_m(x) = \max\limits_{i = 1,\dots, n} \{ \|x_i\|_2/d_i\},\quad
       V_s(x) = \sum\limits_{i = 1}^n e_i \|x_i\|_2, \quad 
       V_d(x) = \sum\limits_{i = 1}^n e_i/d_i \|x_i\|_2^2,
	\end{aligned}
\]
	are Lyapunov functions for $\dot x = Ax$.
\end{cor}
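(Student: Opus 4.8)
\textbf{Proof proposal for Corollary~\ref{cor:lyap-fun}.}
The plan is to verify each of the three candidate functions directly against the flow bounds provided by Theorem~\ref{thm:comp-prin}, applied with zero input ($u\equiv 0$, hence $\upsilon\equiv 0$), so that the comparison system reduces to $\dot\xi = F\xi$ with $\xi_i(0) = \|x_i(0)\|_2$. The key observation to exploit is that for any initial state $x(0) = x^0$, Theorem~\ref{thm:comp-prin} gives $\|x_i(t)\|_2 \le \xi_i(t)$ for all $t\ge 0$ and all $i$, where $\xi(t) = e^{Ft}\xi(0)$ is the trajectory of a \emph{stable positive} system. So the strategy is: (1) for the positive comparison system, show that each of $\max_i\{\xi_i/d_i\}$, $\sum_i e_i\xi_i$, $\sum_i (e_i/d_i)\xi_i^2$ is a genuine Lyapunov function for $\dot\xi = F\xi$ using the vectors $d$, $e$ with $-Fd\in\Rp^n$, $-F^\transpose e\in\Rp^n$; (2) transfer this to $\dot x = Ax$ via the componentwise domination $\|x_i(t)\|_2\le \xi_i(t)$ together with $\xi_i(0) = \|x_i^0\|_2$, observing that each $V$ is monotone in the vector $(\|x_1\|_2,\dots,\|x_n\|_2)$.

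For step (1), the weighted-max function $W_m(\xi) = \max_i \xi_i/d_i$ is the standard Lyapunov function for Metzler Hurwitz matrices: if $-Fd\in\Rp^n$ then along $\dot\xi = F\xi$ one has, at an index $i$ achieving the max, $\frac{d}{dt}(\xi_i/d_i) = \tfrac{1}{d_i}\sum_j F_{ij}\xi_j \le \tfrac{1}{d_i}\sum_j F_{ij} d_j (\xi_i/d_i) = \tfrac{1}{d_i}(Fd)_i \cdot W_m(\xi) < 0$ (using $\xi_j\le d_j W_m(\xi)$ and $F_{ij}\ge 0$ for $j\ne i$), so $W_m$ decreases; this is exactly Proposition~\ref{prop:pos-stab}(b). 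The weighted-sum function $W_s(\xi) = \sum_i e_i\xi_i = e^\transpose\xi$ gives $\dot W_s = e^\transpose F\xi = (F^\transpose e)^\transpose\xi < 0$ for $\xi\ge 0$, $\xi\ne 0$, by Proposition~\ref{prop:pos-stab}(c). The quadratic $W_d(\xi) = \sum_i (e_i/d_i)\xi_i^2 = \xi^\transpose P\xi$ with $P = \diag{e_1/d_1,\dots,e_n/d_n}$ is handled by the diagonal Lyapunov inequality $PF + F^\transpose P \prec 0$; that this particular $P$ works is precisely point (b) (or the construction in the Corollary following Theorem~\ref{prop:comp-ma-hinf}) specialised to $H = 0$, or can be checked directly from $Fd<0$, $F^\transpose e<0$ via an argument like the one in item (iii) of the proof of Proposition~\ref{prop:comp-mat-rel}.

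For step (2), I would argue that since $x\mapsto (\|x_1\|_2,\dots,\|x_n\|_2)$ followed by any of the three order-preserving maps on $\Rnn^n$ recovers $V_m$, $V_s$, $V_d$, the domination $\|x_i(t)\|_2\le\xi_i(t)$ yields $V_\bullet(x(t)) \le W_\bullet(\xi(t))$ for each $t$; combined with $\xi_i(0) = \|x_i^0\|_2$ (so $W_\bullet(\xi(0)) = V_\bullet(x^0)$) and the strict decrease of $W_\bullet$ along $\dot\xi = F\xi$, this gives $V_\bullet(x(t)) \le W_\bullet(\xi(t)) < W_\bullet(\xi(0)) = V_\bullet(x^0)$ for $t>0$, and $V_\bullet(x(t))\to 0$ since $F$ is Hurwitz; positive definiteness and radial unboundedness of each $V_\bullet$ are immediate from the norm expressions and $d_i,e_i>0$. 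The main subtlety — and the step I would be most careful about — is making the comparison argument rigorous: $V_m$ and $V_s$ are only locally Lipschitz, not differentiable, so rather than differentiating $V_\bullet(x(t))$ one should phrase the conclusion in terms of the value function comparison $V_\bullet(x(t))\le W_\bullet(\xi(t))$ and the known decay of the comparison trajectory, which sidesteps any need for Dini derivatives of $V_\bullet$ along $\dot x = Ax$ directly. This is really where all the work of Theorem~\ref{thm:comp-prin} is being cashed in, so the corollary itself is short.
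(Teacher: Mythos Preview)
Your proposal is correct and follows essentially the same route as the paper: invoke Theorem~\ref{thm:comp-prin} with zero input to obtain $\|x_i(t)\|_2\le\xi_i(t)$, use that the corresponding weighted max/sum/quadratic functions $W_\bullet$ are Lyapunov functions for the Metzler--Hurwitz comparison dynamics $\dot\xi=F\xi$, and transfer the decrease via monotonicity and the equality $W_\bullet(\xi(0))=V_\bullet(x^0)$. The paper's proof is the same argument, stated only for $V_m$ (with the other two declared analogous) and finishing by rewriting the comparison inequality as a Dini-derivative bound, whereas you keep the conclusion at the level of the value-function comparison; these are equivalent formulations of the same idea.
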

\begin{proof}
	According to Theorem~\ref{thm:comp-prin}, with $\xi_i(t) = \|x_i(t)\|_2$, $\dot x = A x$ and $\dot \xi = F \xi$ we have that
	$\| x_i(s)\|_2 \le \xi_i(s)$ for all $s>t$. Let $W(\xi) = \max\limits_{i = 1,\dots, n} \{ \xi_i/d_i\}$, then we have
	\begin{gather*}
	V_m(x(s)) \le W(\xi(s)) < W(\xi(t)) = V_m(x(t)),
	\end{gather*}
	for all $s>t$. This implies that
\[
    \begin{aligned}
    	\dot V_m(x) = \liminf_{ h \searrow 0} \frac{V_m(x(t+h)) - V_m(x(t))}{h} 
       \le \liminf_{ h \searrow 0} \frac{W(\xi(t+h)) - W(\xi(t))}{h} 
       <0.
    \end{aligned}
\]
Therefore, $V_m(x)$ is a valid Lyapunov function with $\dot V_m(x)<0$ in the points of differentiability. The cases of $V_s$ and $V_d$ \yang{are} treated similarly.
\end{proof}

\subsection{Relation to Network Input-to-State Stability}
We will show that the Lyapunov function $V_d$ has an input-to-state stability (ISS) interpretation.  Consider a fully observable system:
\begin{gather}
\label{eq:system-full-1}
\begin{aligned}
\dot{x}(t)&=A x(t) + B u(t),
\end{aligned}
\end{gather}
and its comparison system $\dot \xi = F \xi + G \upsilon $, where $F = \cN^\alpha(R A R^{-1})$, $G_{i l} = \|R_i B_{i l}\|_2$ for some invertible $\alpha$-diagonal $R = \diag{R_1, \dots, R_n}$. Using the inequality~\eqref{ineq:comp-block} in Theorem~\ref{thm:comp-prin} and completion of squares technique that given $P_i = e_i/d_i R_i^\transpose R_i$, $\gamma_{i l} = G_{i l} e_i/g_l$, $\phi_{i j}= |F_{i j}|e_i/e_j$ with positive $e_i$, $d_i$, $g_l$ satisfying~\eqref{cond-pos-hinf-1} and \eqref{cond-pos-hinf-2}, we can obtain the following inequalities:
\begin{equation} 
\label{cond:iss-na}
    \begin{gathered}
        \sum\limits_{j = 1}^n x_i^\transpose P_i A_{i j} x_j  + \sum_{l =1}^{n_i} x_i^\transpose P_i B_{i l} u_l \le - \phi_{i i} x_i^\transpose P_i x_i  +
        \sum\limits_{j =1, j\ne i}^n \phi_{i j} x_j^\transpose P_j x_j + \sum\limits_{l =1}^{n_i} \gamma_{i l} u_l^\transpose u_l,\,\,\forall x_i, u_l, \\
    \sum_{j = 1, j \ne i}^{n} \phi_{j i} < \phi_{i i}.
      \end{gathered}
\end{equation}

Such systems are said to satisfy ISS small gain conditions, see~\cite{dashkovskiy2010small} and the references within. Stability of the interconnected system is shown using a comparison system with $\hat F_{i j} = \phi_{i j}$ and $\hat G_{i l} = \gamma_{i l}$. Construction of max- and sum-separable functions will also follow from the ISS conditions. However, in general, the relationship between the flows of the full and comparison systems can be more complicated than the ones described in Theorem~\ref{thm:comp-prin}. Furthermore, the linear case is considered in~\cite{dashkovskiy2010small} and only a nonlinear comparison system was derived. Therefore, our results preserve linearity of comparison systems, which is beneficial in the linear case. The matrix $\cM^\alpha(A)$ can also be used to derive ISS-type small gain conditions, however, in this case we cannot derive a linear comparison system as in the case of $\cN^\alpha$, where the state-space transformation $R$ is the key to build $P$.

{Condition~\eqref{ric:dis-dis-test} from Lemma~\ref{lem:ric-dist} can be used to derive a similar to~\eqref{cond:iss-na} set of condition. In particular, Lemma~\ref{lem:ric-dist} implies that there exist $P_i\succ 0$, $\Xi_{k i}, \Gamma_{i l}, \Phi_{i j}, \Upsilon_{k l}, \Lambda_{k l}\succeq 0$ such that for all $x_i\in \R^{k_i}$, $u_l \in \R^{m_l}$, we have:}
\begin{subequations}\label{cond:iss-ma}
\begin{gather}
\sum\limits_{j = 1}^n x_i^\transpose P_i A_{i j} x_j \le - x_i^\transpose \Phi_{i i} x_i  + 
\sum\limits_{j =1, j\ne i}^n  x_j^\transpose \Phi_{i j} x_j + \sum\limits_{l =1}^{n_i}  u_j^\transpose \Gamma_{i l} u_l,\label{cond:iss-ma-1}\\
\sum_{j = 1, j \ne i}^{n} \Phi_{j i} \preceq \Phi_{i i}.\label{cond:iss-ma-2}
\end{gather}
\end{subequations}

Note that the right hand side in~\eqref{cond:iss-ma-1} does not depend on the Lyapunov functions $x_i^\transpose P_i x_i$, which makes conditions~\eqref{cond:iss-ma-2} conceptually different from the conditions~\eqref{cond:iss-na} as well as the conditions in~\cite{cook1974stability, araki1975application, dashkovskiy2010small} (for nonlinear systems). We note that the conditions~\eqref{cond:iss-na}  (as well as the conditions in~\cite{cook1974stability, araki1975application, dashkovskiy2010small}) require optimisation over the gains $\gamma_{i l}$,$\phi_{i j}$, and storage functions $x_i^\transpose P_i x_i$. These optimisation programs to our best knowledge are typically non-convex, our approach, on the other hand, leads to polynomial time algorithms.

\section{Application to Dissipative Networks} \label{app:diss}
In the context of input-output behaviour, \emph{dissipativity} is considered as a typical analysis tool, which is defined with the help of \emph{the storage function} $V(x) = x^\transpose P x$ and \emph{the supply rate} $w(y,u)$:
\begin{gather*}
2 x^\transpose P (A x + B u) \le -w(y, u) = - \begin{bmatrix}
y \\ u
\end{bmatrix}^\transpose W \begin{bmatrix}
y \\ u
\end{bmatrix}, \forall x, u, y,
\end{gather*}
where $P$ is a positive semidefinite matrix and $W$ is symmetric:
\begin{gather*}
W = \begin{bmatrix}
W_{1 1} &   W_{1 2} \\
W_{1 2}^\transpose &  -W_{2 2}
\end{bmatrix}.
\end{gather*}
In particular, if we set $W_{1 2} = 0$, $W_{1 1} = I$, $W_{2 2} = \delta^2 I$, then we can estimate the $\Hinf$ norm of the system using \emph{the Bounded Real Lemma}~\cite{ZDG}.

One can see the partitioned system~\eqref{eq:system-full} as an interconnection of $n$ systems, where the Hurwitz matrices $A_{i i}$ model the dynamics of individual subsystems, while the terms $A_{i j}$ for $i \ne j$ model their interconnections. There are
a number of ways to model an interconnection of linear systems. For example, consider the setting in Figure~\ref{fig:network-interconnection}, where $\cG_i = \cC_i (s I - \cA_{i})^{-1} \cB_i$ and the constant matrices $M$, $K$ and $N$ are partitioned according to the inputs and the outputs to $\cG_i$. Let the diagonal block entry $M_{i i}$ equal to zero, meaning that we forbid direct feedback loops. This setting was considered in~\cite{meissen2015compositional} and the conditions on dissipativity of the network were derived using local dissipativity conditions, \emph{i.e.}, dissipativity of the subsystems. Assume that the rows of the mapping $\left[\begin{smallmatrix} z \\ y \end{smallmatrix}\right] \rightarrow w_i$ are linearly independent. Let the local dissipativity conditions for subsystems with the supply rate defined by $Y_i$ and the storage function $x_i^\transpose \cP_i x_i$:
\begin{gather}
\begin{gathered}
\begin{bmatrix}
\cP_i \cA_i + \cA_i^\transpose \cP_i  + \cC_i^\transpose Y^i_{1 1} \cC_i & \cP_i \cB_i + \cC_i^\transpose Y^i_{1 2} \\
\ast & -Y^i_{2 2}
\end{bmatrix} \preceq 0, \\
Y^i = (Y^i)^\transpose  = \begin{bmatrix}
Y^i_{1 1 } &  Y^i_{ 1 2} \\
(Y^i_{1 2})^\transpose & -Y^i_{2 2}
\end{bmatrix},
\end{gathered} \label{local-supply-cond}
\end{gather}
where $\ast$ stands for the transpose of the upper right corner of the matrix.

\begin{figure}[t]
	\centering
	\includegraphics[width = 0.3\columnwidth]{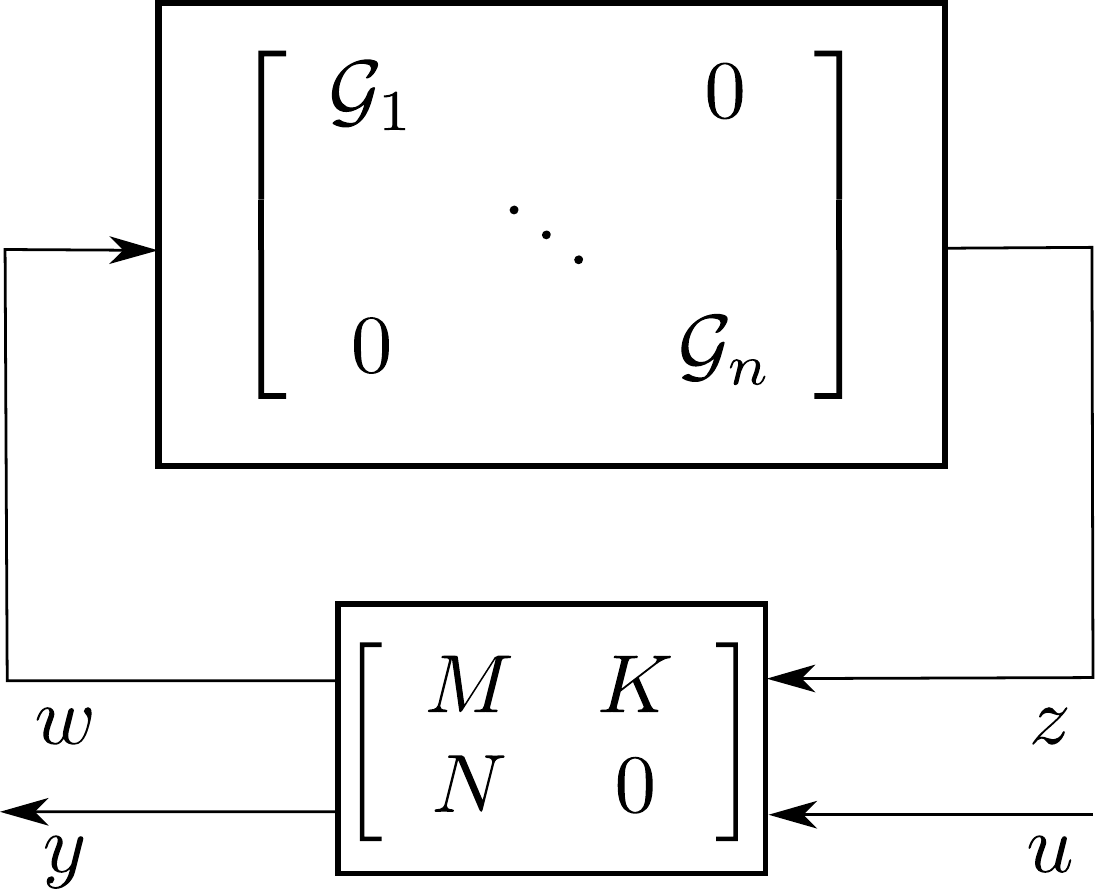}
	\caption{Network of systems modelled by a feedback interconnection}
	\label{fig:network-interconnection}
\end{figure}
Consider the centralised coupling constraint
\begin{gather} \label{global-supply-cond}
\begin{bmatrix}
M^\transpose & N^\transpose & I &0 \\ K^\transpose & 0 & 0 & I
\end{bmatrix}
T_\pi^\transpose Q T_\pi
\begin{bmatrix}
M^\transpose & N^\transpose & I &0 \\ K^\transpose & 0 & 0 & I
\end{bmatrix}^\transpose
\preceq 0,
\end{gather}
where $Q = \diag{-Y^1, \dots, -Y^n, W}$, $W = W^\transpose$ is fixed in advance and specifies the global supply rate, and the matrix $T_\pi$ is a permutation matrix such that
\[
\begin{bmatrix}
w_1^\transpose & z_1^\transpose & \cdots & w_n^\transpose & z_n^\transpose & u^\transpose & y^\transpose
\end{bmatrix} =   \begin{bmatrix}
w^\transpose & u^\transpose & z^\transpose & y^\transpose
\end{bmatrix} T_\pi^\transpose.
\]
In particular, if $Y_{1 2}^i$ and $W_{1 2}$ are zero matrices, then $T_\pi^\transpose Q T_\pi = \diag{\cY_2, W_{1 1},  -\cY_1,  -W_{2 2}}$, where
$\cY_1 = \diag{Y^1_{1 1}, \dots, Y^n_{1 1}}$ and $\cY_2 = \diag{Y^1_{2 2}, \dots, Y^n_{2 2}}$. One of the main result in~\cite{meissen2015compositional} states that under some mild assumptions~\eqref{local-supply-cond} and~\eqref{global-supply-cond} hold if and only if the network is dissipative with a storage function $\sum_{i =1}^n x_i^\transpose \cP_i x_i$ with respect to the supply rate specified by $W$. These conditions can only be verified with semidefinite programming with the decision variables $\cP_i\succ 0$, $Y^i = (Y^i)^\transpose$.

Our framework can also be applied to this case. One can set $\cA_{i} = A_{i i}$, $A_{i j} = \cB_i M_{i  j} \cC_j$ for $i \ne j$, while $C_{k j} = N_{k j}\cC_j$ and $ B_{i l} = \cB_{j} K_{i l}$ with $i, j = 1,\dots, n$, $k = 1,\dots, n_o$, $l = 1,\dots, n_i$ and simply apply the theory to the resulting system. However, we consider the following comparison system
\begin{equation} \label{comp-sys-sg}
\begin{aligned}
F_{i j} &= \begin{cases}
-{1} & \text{if~} i = j,\\
\|\cC_i (s I - \cA_i)^{-1} \cB_i M_{i j}\|_\Hinf &  \text{otherwise,}
\end{cases} \\
G_{i l} &= \|\cC_i (s I - \cA_i)^{-1} \cB_i K_{i l}\|_\Hinf,\,\, H_{k j} = \overline{\sigma}(N_{k j}).
\end{aligned}
\end{equation}
In order to decouple the system information from interconnection information, one can use:
\begin{equation} \label{comp-sys-sg-1}
\begin{aligned}
\widetilde F_{i j} &= \begin{cases}
-\|\cC_i (s I - \cA_i)^{-1} \cB_i\|_\Hinf^{-1} & \text{if~} i = j,\\
\|M_{i j}\|_2 &  \text{otherwise,}
\end{cases} \\
G_{i l} &= \overline{\sigma}(K_{i l}), \,\, H_{k j} = \overline{\sigma}(N_{k j}),
\end{aligned}
\end{equation}
which would lead to more conservative estimates. A positive aspect of these representations is the independence on the state-space representations of subsystems $\cG_i$. The following result is the consequence of applying our techniques in this setting.

\begin{prop}
	\label{prop:small-gain}
		Consider the network of the stable subsystems $\cG_i = \cC_i (s I - \cA_i)^{-1} \cB_i$ (the matrices $\cA_i$ are Hurwitz) interconnected through matrices $M$, $N$ and $K$ as in Figure~\ref{fig:network-interconnection}. Consider the comparison system~\eqref{eq:system-comparison} with the state-space matrices defined in~\eqref{comp-sys-sg} and let there exist positive vectors $e$, $d$, $g$, $f$, and a scalar $\delta$ satisfying~(\ref{cond-pos-hinf-1},\ref{cond-pos-hinf-2}). Then 

(i) the network also satisfies conditions~(\ref{local-supply-cond},~\ref{global-supply-cond}), with
$Y_{1 1}^{i} = \phi_{i i} I\succ 0$, $Y_{2 2}^i\succ 0$ $Y_{1 2}= 0$, $W_{1 1} = I$, $W_{2 2} = -\delta^2 I$, and $W_{1 2} = 0$.  

(ii)  there exists $P = \diag{P_1, \dots, P_n}$ such that
			\begin{gather*}
			P A + A^\transpose P + C^\transpose C + P B B^\transpose P/\delta^2 \prec 0
			\end{gather*}
			
and $\|C (s I - A)^{-1} B \|_\Hinf < \delta$.
\end{prop}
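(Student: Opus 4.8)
\emph{Plan.} The plan is to route the whole argument through the compositional dissipativity result of~\cite{meissen2015compositional}: first prove part (i) by manufacturing local supply rates $Y^i$ for the subsystems $\cG_i$ directly out of the certificate $(e,d,g,f)$ that comes with the comparison system~\eqref{comp-sys-sg}, and then obtain part (ii) almost for free from (i) together with the Bounded Real Lemma. The whole construction mirrors the proof of Lemma~\ref{lem:ric-dist} and Theorem~\ref{prop:comp-ma-hinf}, with the diagonal blocks $A_{ii}$ there replaced by the subsystems $\cG_i=(\cA_i,\cB_i,\cC_i)$ and the couplings $A_{ij},B_{il},C_{kj}$ replaced by $\cB_iM_{ij}\cC_j,\cB_iK_{il},N_{kj}\cC_j$ (since $D=0$ and $\cG_i$ has no feedthrough, $J=0$ everywhere).

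\emph{Local conditions.} Following the proof of Lemma~\ref{lem:ric-dist} I would set $\phi_{ij}=F_{ij}e_i/d_j$ and $\gamma_{il}=G_{il}e_i/f_l$ for $j\neq i$, $\phi_{ii}=e_i/d_i$; then the strict inequalities~\eqref{cond-pos-hinf-1} and~\eqref{cond-pos-hinf-2} read $\sum_{j\neq i}F_{ij}d_j+\sum_l G_{il}f_l<d_i$, $\sum_{i\neq j}\phi_{ij}+\sum_k H_{kj}g_k/d_j<\phi_{jj}$, $\sum_i\gamma_{il}<\delta^2$, and $\sum_j H_{kj}d_j\le g_k$. Take $Y_{11}^i=\phi_{ii}I\succ0$, $Y_{12}^i=0$, and define $Y_{22}^i\succ0$ through $(Y_{22}^i)^{-1}=\sum_{j\neq i}\phi_{ij}^{-1}M_{ij}M_{ij}^\transpose+\sum_l\gamma_{il}^{-1}K_{il}K_{il}^\transpose$ (discarding zero terms; if this is rank-deficient one restricts $\cB_i$ to the excited input directions). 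Using $\|\cC_i(sI-\cA_i)^{-1}\cB_iM_{ij}\|_\Hinf=F_{ij}$, $\|\cC_i(sI-\cA_i)^{-1}\cB_iK_{il}\|_\Hinf=G_{il}$ and the sub-additivity-of-$\overline\sigma$ computation from the lines just after~\eqref{dis_actual}, one gets $\phi_{ii}\|\cC_i(sI-\cA_i)^{-1}\cB_i(Y_{22}^i)^{-1/2}\|_\Hinf^2\le(\phi_{ii}/e_i)\big(\sum_{j\neq i}F_{ij}d_j+\sum_l G_{il}f_l\big)<1$, so Proposition~\ref{prop:brl} supplies $\cP_i\succ0$ and a Schur complement turns this into the local LMI~\eqref{local-supply-cond}.

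\emph{Global condition and part (ii).} A routine unfolding of the permutation $T_\pi$ (with $W_{11}=I$, $W_{22}=\delta^2I$, $W_{12}=0$) reduces~\eqref{global-supply-cond} to $(Mz+Ku)^\transpose\cY_2(Mz+Ku)+\|Nz\|_2^2\le z^\transpose\cY_1z+\delta^2\|u\|_2^2$ for all $z,u$, with $\cY_1=\diag{\phi_{11}I,\dots,\phi_{nn}I}$ and $\cY_2=\diag{Y_{22}^1,\dots,Y_{22}^n}$. For the first term I would use the minimum-norm-preimage identity $(\mathcal B v)^\transpose(\mathcal B\Theta\mathcal B^\transpose)^{-1}(\mathcal B v)\le v^\transpose\Theta^{-1}v$ with $\mathcal B=[\dots\,M_{ij}\,\dots\,|\,\dots\,K_{il}\,\dots]$ to bound $w_i^\transpose Y_{22}^iw_i\le\sum_{j\neq i}\phi_{ij}\|z_j\|_2^2+\sum_l\gamma_{il}\|u_l\|_2^2$, then sum over $i$ and invoke $\sum_{i\neq j}\phi_{ij}<\phi_{jj}-\sum_k H_{kj}g_k/d_j$ and $\sum_i\gamma_{il}<\delta^2$, leaving a slack of exactly $\sum_j\big(\sum_k H_{kj}g_k/d_j\big)\|z_j\|_2^2$; a weighted Cauchy--Schwarz with weights $H_{kj}d_j$ together with $H_{kj}=\overline\sigma(N_{kj})$ and $\sum_j H_{kj}d_j\le g_k$ bounds $\|Nz\|_2^2$ by precisely that slack. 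This establishes~\eqref{global-supply-cond} and hence (i). For (ii) I would sum the local dissipation inequalities and substitute $w=Mz+Ku$, $y=Nz$, obtaining $\tfrac{d}{dt}(x^\transpose Px)\le\delta^2\|u\|_2^2-\|y\|_2^2$ along trajectories of~\eqref{eq:system-full} with $P=\diag{\cP_1,\dots,\cP_n}\succ0$ (equivalently, this is the conclusion of~\cite{meissen2015compositional} applied to (i)); written as an LMI and reduced by a Schur complement this is $PA+A^\transpose P+C^\transpose C+PBB^\transpose P/\delta^2\preceq0$, strict once the $\varepsilon$-slack from the strict inequalities is absorbed, which forces $A$ Hurwitz and, via Proposition~\ref{prop:brl}, $\|C(sI-A)^{-1}B\|_\Hinf<\delta$.

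\emph{Main obstacle.} The hard part is gluing the local and global conditions: because $\cB_i$ and $\cC_j$ do not come out of the $\Hinf$ norms, the comparison system~\eqref{comp-sys-sg} does \emph{not} dominate the canonical comparison system of $(A,B,C,0)$, so Theorem~\ref{prop:comp-ma-hinf} cannot be invoked verbatim and the certificate must be assembled subsystem by subsystem; the real content is choosing a single $Y_{22}^i$ that passes both the local Bounded-Real test and the global coupling LMI, which is what the formula $(Y_{22}^i)^{-1}=\sum_{j\neq i}\phi_{ij}^{-1}M_{ij}M_{ij}^\transpose+\sum_l\gamma_{il}^{-1}K_{il}K_{il}^\transpose$ accomplishes via the min-norm duality. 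A lesser nuisance is guaranteeing $Y_{22}^i\succ0$, i.e.\ that every input channel of $\cG_i$ is actually driven, which I would dispose of by restricting $\cB_i$ or by an $\varepsilon$-perturbation.
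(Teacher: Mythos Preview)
Your proposal is correct and follows essentially the same architecture as the paper's proof: extract scalars $\phi_{ij},\gamma_{il},\eta_{kj}$ from the LP certificate~(\ref{cond-pos-hinf-1},\ref{cond-pos-hinf-2}), obtain the local condition~\eqref{local-supply-cond} from the $\Hinf$ bound~\eqref{small-gain-cond} via the Bounded Real Lemma, verify the global coupling~\eqref{global-supply-cond}, and then read off~(ii). The only real difference is in how~\eqref{global-supply-cond} is checked: the paper works at the matrix level, recording the intermediate inequalities~\eqref{ric-l2gain-network-1}--\eqref{ric-l2gain-network-2}, combining them into $\cY_1\succ\hat\Phi+N^\transpose N$ and $\cY_2^{-1}\succ M\hat\Phi^{-1}M^\transpose+K\delta^{-2}K^\transpose$, and reaching~\eqref{global-supply-cond} through a chain of Schur complements; you instead unfold~\eqref{global-supply-cond} into the scalar inequality $(Mz+Ku)^\transpose\cY_2(Mz+Ku)+\|Nz\|_2^2\le z^\transpose\cY_1z+\delta^2\|u\|_2^2$ and bound the two terms directly with the min-norm-preimage identity and a weighted Cauchy--Schwarz. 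These are two presentations of the same estimate; your route is a touch more self-contained and makes the specific choice $(Y_{22}^i)^{-1}=\sum_{j\neq i}\phi_{ij}^{-1}M_{ij}M_{ij}^\transpose+\sum_l\gamma_{il}^{-1}K_{il}K_{il}^\transpose$ do visible work, while the paper's matrix formulation isolates the coupling constraints~\eqref{ric-l2gain-network-1}--\eqref{ric-l2gain-network-2} that can later be relaxed to full SDPs.
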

\begin{proof}
	(i) We will only sketch the proof due to similarity to the proof of Theorem~\ref{prop:comp-ma-hinf}. We set $\eta_{k j} = H_{k j} d_j/g_k$, $\gamma_{i l} = G_{i l} e_i/ f_l$, $\phi_{i j} = F_{i j} e_i/d_j$, where the scalars $e_i$, $d_i$, $f_l$, $g_k$ satisfy~(\ref{cond-pos-hinf-1},\ref{cond-pos-hinf-2}). We define $\tilde \Phi_i$, $\tilde \Gamma_i$, $\cI_i$, $\cL_i$, $\cK_i$ as in Lemma~\ref{lem:ric-dist}, and obtain the bounds
		\begin{equation}\label{small-gain-cond}
		\begin{aligned}
		&\|\cC_i (s I - \cA_{i})^{-1} \cB_i\begin{bmatrix} M \tilde\Phi_i^{-1/2} & N \tilde \Gamma_i^{-1/2}\end{bmatrix}\|_\Hinf^2 < \phi_{i i}^{-1},\\
		&\sum_{j \in \cI_i} \phi_{j i} I + \sum_{k \in \cK_i} N_{k i}^\transpose N_{k i} /\eta_{k i}  \prec \phi_{ii} I.
		\end{aligned}
		\end{equation}

Using these relations, we get the following Riccati inequalities with coupling constraints:
	\begin{gather}
	\label{ric-l2gain-network} P_i \cA_{i} + \cA_{i}^\transpose P_i + \cC_i^\transpose Y^i_{1 1} \cC_i  + P_i \cB_i (Y^i_{2 2})^{-1} \cB_i^\transpose P_i \prec 0,\\
	\label{ric-l2gain-network-1}Y^i_{1 1}      \succ \sum\limits_{j \in \cI_i}\Phi_{j i} + \sum\limits_{k \in \cK_i}N_{k i}^\transpose \Xi_{k i}^{-1} N_{k i}, \,\,\,\\
	\label{ric-l2gain-network-2}(Y^i_{2 2})^{-1} \succ \sum\limits_{j \in \cI_i} M_{i j} \Phi_{i j}^{-1} M_{i j}^\transpose + \sum\limits_{l \in \cL_i}K_{i l} \Gamma_{i l}^{-1} K_{i l}^\transpose,  \\
	\Gamma_{i l}, \Phi_{i j}, \Xi_{k i} \succeq 0, \,\sum_{i =1}^{n} \Xi_{k i} \preceq I, \,\, \sum_{i =1}^{n} \Gamma_{i l} \prec \delta^2 I,
	\end{gather}	
	where $Y_{1 1}^i = \phi_{i i} I$, $\Phi_{i j}=\phi_{i j} I$, $\Xi_{k i} = \eta_{k i} I$, and $\Gamma_{i l} = \gamma_{i l}I$. The conditions in~\eqref{ric-l2gain-network-1} imply that $\cY_1  \succ \hat \Phi + N^\transpose  N$,	where $\hat \Phi = \diag{\sum_{j\ne 1, j =1}^n\Phi_{j 1}, \cdots, \sum_{j\ne n, j = 1}^n\Phi_{j n}}$, $\cY_1 = {\rm diag}\{Y_{1 1}^1,$ $\dots, Y_{11}^n\}$. While the conditions in~\eqref{ric-l2gain-network-2} imply that
		\begin{equation}
		\cY_2^{-1}  \succ  M \hat \Phi^{-1} M^\transpose + K \left(\sum\limits_{j = 1}^{n_i} \hat \Gamma_i\right)^{-1} K^\transpose 
		\succ  M \hat \Phi^{-1} M^\transpose + K \delta^{-2} K^\transpose, \label{ineq:sg-1}
		\end{equation}
	where $\hat \Gamma_i = \diag{\sum_{j =1}^n\Gamma_{j 1}, \cdots, \sum_{j = 1}^n\Gamma_{j n_i}}$, $\cY_2 = {\rm diag}\{Y_{2 2}^1,$ $\dots, Y_{2 2}^n\}$. Applying Schur's complement twice to~\eqref{ineq:sg-1} and using $\cY_1  \succ \hat \Phi + N^\transpose N$ yields the following chain of inequalities
	\begin{equation}
	\cY_1 - N^\transpose W_1 N \succ \hat \Phi \succ  \begin{bmatrix}
	M \\ 0
	\end{bmatrix}^\transpose \begin{bmatrix}
	\cY_2^{-1} & K \\
	K^\transpose     & W_2
	\end{bmatrix}^{-1} \begin{bmatrix}
	M \\ 0
	\end{bmatrix}, \label{cond:suff}
	\end{equation}
	where $W_1 = I$, $W_2 = \delta^2 I$. Using Schur's complement again we get:
	\begin{gather*}
	\begin{bmatrix}
	\cY_1     &  N^\transpose   & M^\transpose     & 0 \\
	N         &  W_1^{-1} & 0          & 0 \\
	M         &   0       & \cY_2^{-1} & K \\
	0         &   0       & K^\transpose     & W_2
	\end{bmatrix} \succ 0 \Leftrightarrow  
	-\begin{bmatrix}
	\cY_1 & 0 \\
	0     & W_2
	\end{bmatrix} + \begin{bmatrix}
	M & N \\ K & 0
	\end{bmatrix}^\transpose \begin{bmatrix}
	\cY_2 & 0 \\
	0     & W_1
	\end{bmatrix} \begin{bmatrix}
	M & N \\ K & 0
	\end{bmatrix} \prec 0,
	\end{gather*}
	which after some algebraic manipulations leads to~\eqref{global-supply-cond} with the matrix $Q$ specified above.
	
	(ii) the proof is straightforward.
\end{proof}

As with our previous results, the $\Hinf$ norm of the network can be estimated using linear programming or algebra. The constraints~(\ref{local-supply-cond},~\ref{global-supply-cond}) with $Q$ described in Proposition~\ref{prop:small-gain} are necessary of stability of the comparison system. For sufficiency, at least the existence of an $\alpha$-diagonal $\tilde \Phi$ satisfying~\eqref{cond:suff} is required. On the other hand the constraints~(\ref{ric-l2gain-network}-\ref{ric-l2gain-network-2}) can be relaxed by replacing scalar variables with positive semidefinite matrices.
Note that all the constraints can be transformed to convex ones using Schur's complement. Finally, assume that the systems $\cG_i$ are single-input-single-output and such that $\cC_i\|(sI -\cA_{i})^{-1} \cB_i\|_\Hinf = \|-\cC_i \cA_i^{-1}\cB_i\|_2$, for example all subsystems are internally positive. In this case the constraint~\eqref{small-gain-cond} is tight, that is for any valid choice of $\phi_{i j}$, $\gamma_{i j}$ we can set $\phi_{ii}^{-1}$ to be equal to $\|\cC_i(sI - \cA_{i})^{-1} \cB_i\begin{bmatrix} M \tilde\Phi_i^{-1/2} & N \tilde \Gamma_i^{-1/2}\end{bmatrix}\|_\Hinf^2$.

\section{Additional Examples}

\begin{figure*}
	\centering
	\includegraphics[height = 0.28\columnwidth]{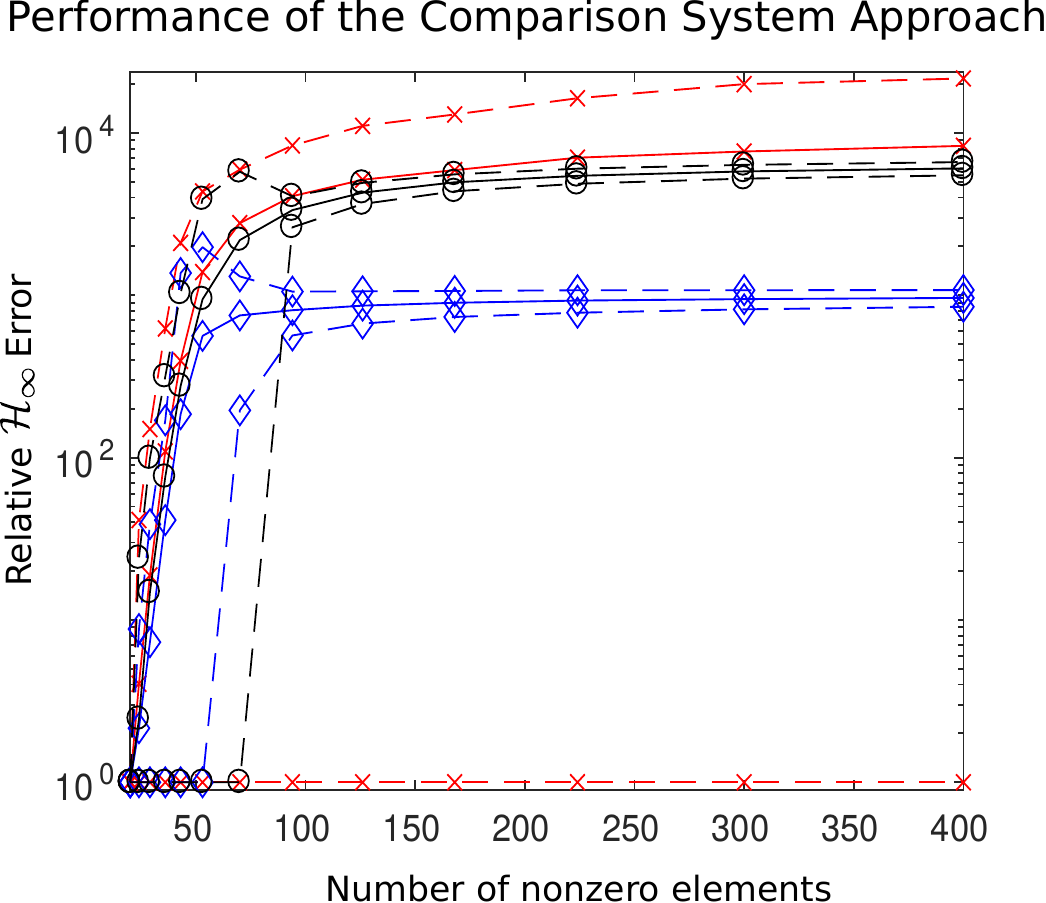} 
\includegraphics[height = 0.28\columnwidth]{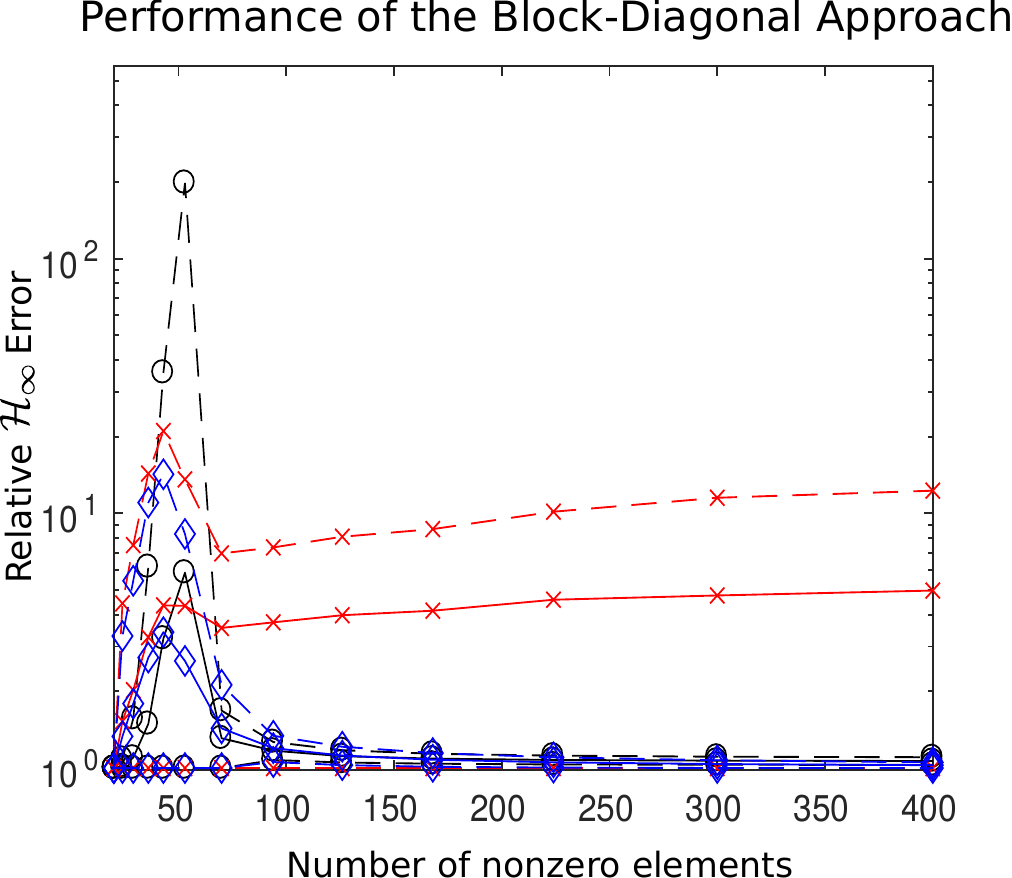} 
\includegraphics[height= 0.28\columnwidth]{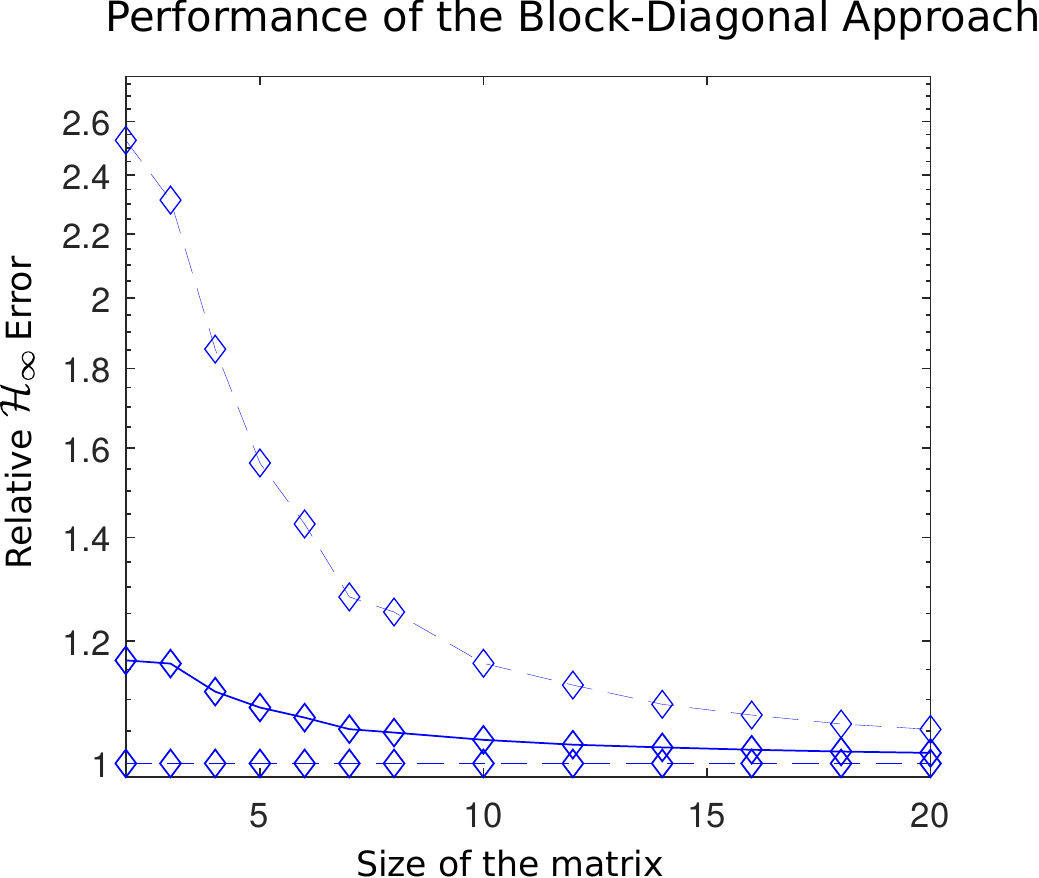}
	\caption{Mean values (solid lines) and $90\%$ confidence intervals (dashed lines). The left panel: the values $\delta_{\rm cs}^1$ (red lines with cross markers), $\delta_{\rm cs-p}^1$ (blue lines with diamond markers), and $\delta_{\rm cs}^{20}$ (black lines with circle markers) against the number of nonzero elements in $20\times20$ matrices. The centre panel: the values $\delta_{\rm bd}^1$ (red lines with cross markers), $\delta_{\rm bd-p}^1$ (blue lines with diamond markers), and $\delta_{\rm bd}^{20}$ (black lines with circle markers) against the number of nonzero elements in $20\times20$ matrices. The right panel: the values $\delta_{\rm bd}^1$ against the size of the full matrices. }
	\label{fig:perf_44}
\end{figure*}

\subsection{\texorpdfstring{$\Hinf$}{H infinity} Performance Analysis with \texorpdfstring{$\alpha = \bfone$}{a = 1}}
We consider random systems with $A\in\R^{20\times 20}$ such that $\cM^\bfone(A)$ is Hurwitz, $B^k\in\R^{20\times k}$, $C^k\in\R^{k\times 20}$ and $D = 0$.
We first generate a random matrix $\tilde A$ with the fixed number of nonzero entries where all the nonzero entries are distributed according to the uniform distribution $\cU$ on the interval $[0,~1]$. We then define $\cM^\alpha(A) = \tilde A - ((1 + \varepsilon)\max(0, \max_i(\Re(\lambda_i(\tilde A)))))I$, where $\lambda_i(\tilde A)$'s are eigenvalues of $\tilde A$ and $\varepsilon$ is a scalar larger than zero. After that we obtain the matrix $A$ by flipping with probability $0.5$ the signs of the nondiagonal entries of the matrix $\cM^\alpha(A)$. We generate full matrices $B^k$ and $C^k$ with entries distributed according to $\cU(-1, 1)$ for two cases $k= 1$ and $k = 20$. The dominant eigenvalue of $\cM^\bfone(A)$ will lie close to the origin, but similar results were observed if the spectrum is shifted farther to the left. We then compute the following quantities
\[
\delta_{\rm bd-p}^k= \frac{\delta_{\rm bd-p}^k}{\| \cG_{p}^k\|_\Hinf},\,\, \delta_{\rm bd}= \frac{\delta_{\rm bd}^k}{\| \cG^k \|_\Hinf},
\delta_{\rm cs-p}^k= \frac{\| \cG_{cs}^k\|_\Hinf}{\| \cG_{p}^k\|_\Hinf},\,\, \delta_{\rm cs}= \frac{\| \cG_{cs}^k\|_\Hinf}{\| \cG^k\|_\Hinf},
\]
where $\cG^k = C^k (s I -A)^{-1} B^k$, $\cG_{cs}^k = H^k (s I - F)^{-1} G^k$, $\cG_{p}^k = |C^k| (s I -A)^{-1} |B^k|$, where $|X|$ stands for the entry-wise application of the absolute value function to the matrix $X$, while $\delta_{\rm bd}^k$ and $\delta_{\rm bd-p}^k$ stand for solutions of the program
\begin{align}
\label{prog:bd_hinf}\delta_{\rm bd} = \min\limits_{P, \delta}~~& \delta \\
\notag \text{subject to:  }      & P\succ 0 \text{ is $\alpha$-diagonal and satisfies~\eqref{ric-hinf}}
\end{align}
with matrices $A$, $B^k$, and $C^k$ and matrices $A$, $|B^k|$, and $|C^k|$, respectively. We remind the reader that $k$ denotes the dimension of the matrices $B^k$ and $C^k$. Since we compare the relative norms of the systems, we recover the loss of generality by restricting the support of distributions of the entries of the matrices $A$, $B^k$ and $C^k$. That is, similar results are obtained while generating the entries of $\tilde A$ using $\cU([0, a_{\rm max}])$, $B_{i j}^k\sim \cU([-b_{\rm max}, b_{\rm max}])$ and $C_{i j}^k\sim \cU([-c_{\rm max}, c_{\rm max}])$  for some positive $a_{\rm max}$, $b_{\rm max}$, $c_{\rm max}$.

For each number of nonzero elements, we generated $10000$ matrices as described above. The mean values of $\delta_{\rm cs-p}^1$, $\delta_{\rm cs}^1$ and $\delta_{\rm cs}^{20}$ are depicted in the left panel in Figure~\ref{fig:perf_44}, while the mean values of $\delta_{\rm bd}^1$, $\delta_{\rm bd-p}^1$ and $\delta_{\rm bd}^{20}$ are depicted in the centre panel in Figure~\ref{fig:perf_44}. There were two not entirely expected results in these simulations. Firstly, the values $\delta_{\rm bd-p}^1$, $\delta_{\rm bd}^{20}$ converge to the values close to $1$ on average with the number of nonzero entries increasing, while the values $\delta_{\rm bd}^1$ start increasing again with the number of nonzero entries larger than $80$. Secondly, the relative errors $\delta_{\rm bd-p}^1$, $\delta_{\rm bd}^1$, $\delta_{\rm bd}^{20}$ peak between $40$ and $50$ nonzero entries, and later decrease. We do not depict the results for $\delta_{\rm bd-p}^{20}$, $\delta_{\rm cs-p}^{20}$, since $\delta_{\rm bd-p}^{20}$ has a quite similar performance to $\delta_{\rm bd}^{20}$, while $\delta_{\rm cs-p}^{20}$ remarkably has a quite similar performance to $\delta_{\rm cs-p}^1$.

It is not entirely clear why sign-indefinite low-rank matrices $B^k$ and $C^k$ add conservatism to the solution of the Riccati inequalities on average, however, we can elaborate on the conservatism peak for sparse matrices. Let $B^k$ and $C^k$ be diagonal matrices, hence $P B^k (B^k)^\transpose P/\delta^2 + (C^k)^\transpose C^k$ is diagonal. In this case, the conservatism should originate with matrix $A$. If the matrix $A$ is full and scaled diagonally dominant, then the values on the diagonal generally have larger magnitudes than the off-diagonal terms. In contrast to sparse matrices, the off-diagonal elements can have comparable magnitudes with the elements on the diagonal. Therefore rescaling with a diagonal $P$ may not be sufficient to compensate for $P B^k (B^k)^\transpose P/\delta^2 + (C^k)^\transpose C^k$ in the case of sparse matrices more often than in the case of full matrices.

Our observations lead to a conclusion that for sparse scaled diagonally dominant matrices using diagonal matrices $P$ in Lyapunov/Riccati inequalities may not be advisable, even though such $P$ exist. Instead one should use block-diagonal $P$ in order to reduce the conservatism. Naturally, two questions arise: (a) what is the size of the blocks on the diagonal of $P$, and (b) how to choose the pattern of $P$.
Some indications on how to approach these questions are provided in~\cite{mason2014chordal, zheng2016admmdual}, where chordal sparsity can help to determine the size of the blocks of $P$ and the pattern of $P$.
In the case of a), we can provide further analysis. Consider the right panel in Figure~\ref{fig:perf_44}, where we plot the relative error against the size $n$ of the full matrix $A$. Note that the conservatism reduces considerably as $n$ grows and with $n$ approaching $20$ disappears with a high probability. Therefore, it appears that for the block sizes up to $10$, it is beneficial to use similar size blocks in the matrix $P$. However, if the dimension of the full blocks is larger than $20$, then we can employ a sparse $P$ without a significant loss of performance.

We also note that $\delta_{\rm bd-p}^k$ is not always smaller than $\delta_{\rm bd}^k$. For example, set $k =1$ and
\begin{gather*}
A = \begin{bmatrix}
-5 &  0 & 0 \\-7 & -7 & 0 \\ 6  & 3  &-4
\end{bmatrix}\,\,  B^1 = \begin{bmatrix}
4 \\ -4 \\ 1
\end{bmatrix},\,\,  C^1 = \begin{bmatrix}
6 \\ 3 \\ -4
\end{bmatrix}^T.
\end{gather*}
In this case, however, we have that $\delta_{\rm bd-p}^1 < \delta_{\rm bd}^1$, but at the same time $\| \cG_p^1 \|_\Hinf < \| \cG^1 \|_\Hinf$ and $\delta_{\rm bd-p}^1>\delta_{\rm bd}^1$. Therefore, positivity of $B^1$ and $C^1$ is still beneficial for solvability of the LMI in~\eqref{prog:bd_hinf}. We conclude this example by indicating that further studies of the systems with Hurwitz $\cM^\alpha(A)$ can lead to scalable, but less conservative analysis methods than the comparison system approach.

\subsection{Time and Frequency Responses with \texorpdfstring{$\alpha = \bfone$}{a=1}}
\label{ex:flow}

\begin{figure}
	\centering
	\subfigure[ ]{\includegraphics[height = 0.3\columnwidth]{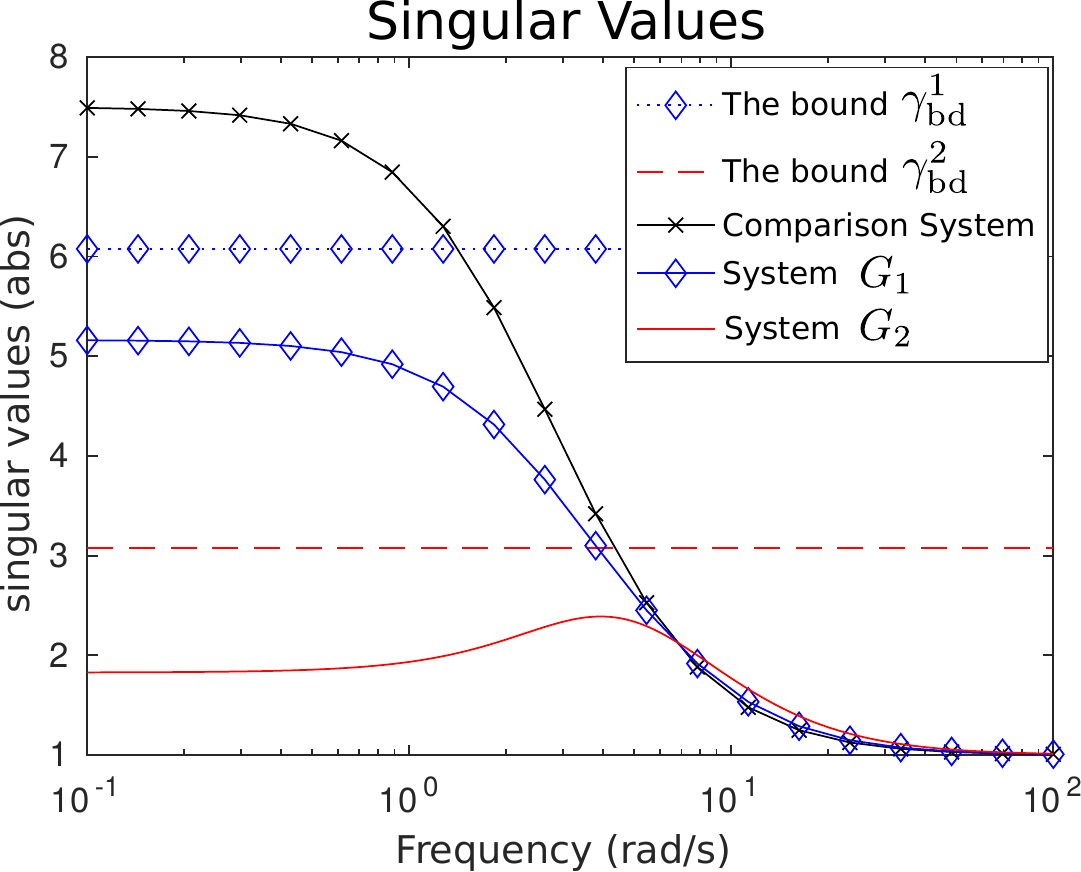}} \hspace{10mm}
	\subfigure[ ]{\includegraphics[height = 0.3\columnwidth]{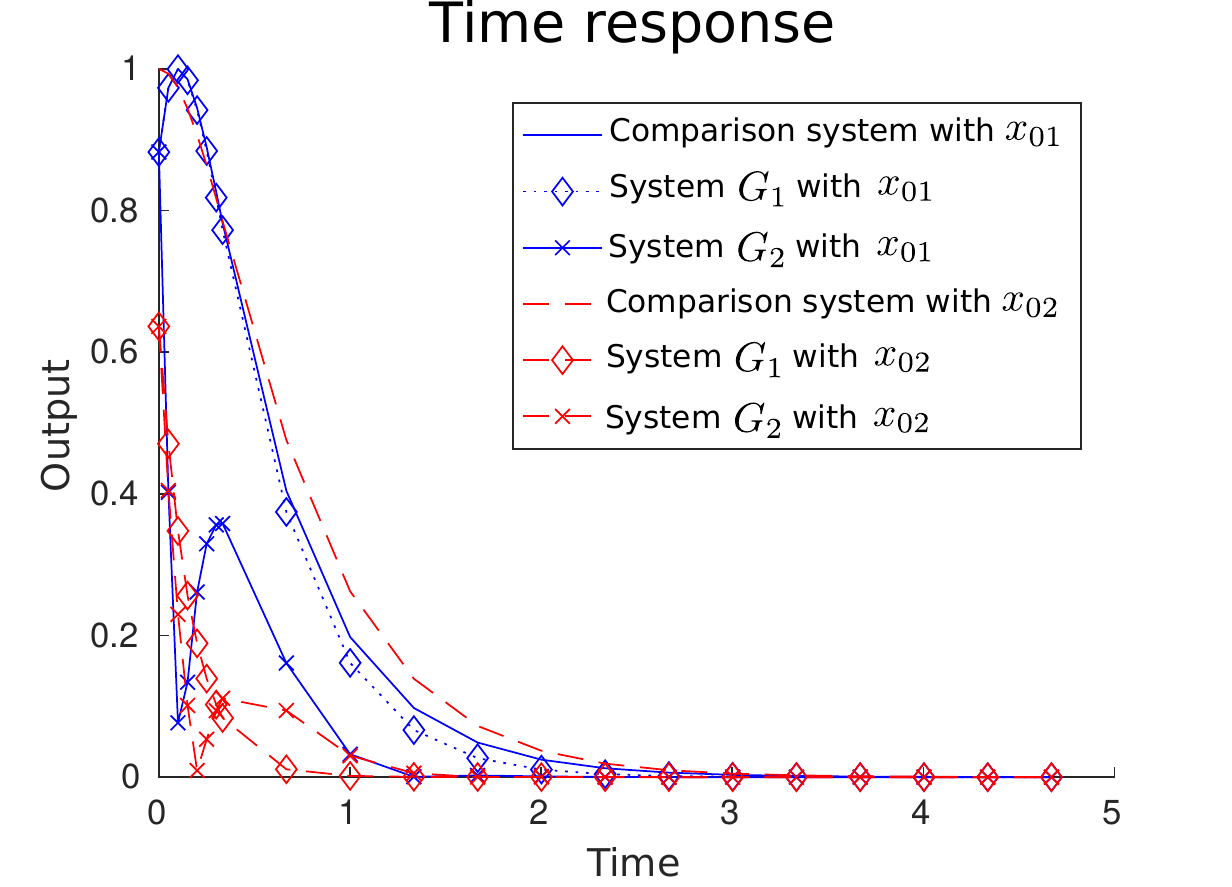}}
	
	\caption{Responses of Systems $G_1$ and $G_2$ in Subsection~\ref{ex:flow}: (a) Frequency response; (b) Initial condition response.}
	\label{fig:resp_sdd2}
\end{figure}
To illustrate the result in Theorem~\ref{thm:comp-prin}, we consider the time and frequency responses of the following system

\begin{gather*}
G_1 = \left[ \begin{array}{c|c}A_1 & B \\ \hline C & D
\end{array}\right] = \left[\begin{array}{ccccc|c}
-5 & -2  & -1 & 0  &  4 & 3 \\
0  & -5  & -3 & -1 &  0 & 0 \\
0  & -2  & -9 & 0  &  0 & 1 \\
0  &  0  & -2 & -5 &  1 & 0 \\
1  &  3  &  0 & 0  & -4 & 1 \\
\hline
1  &  2  &  0 & 0  &  8& 1
\end{array}\right],
\end{gather*}
where  $\cM^\bfone(A_1)$ is Hurwitz and hence a diagonal solution to the Riccati inequality~\eqref{ric-hinf} exists. We also flip a sign of the $(5,1)$ entry of the matrix $A_1$ and get the system $G_2 = \left[ \begin{array}{c|c}A_2 & B \\ \hline C & D \end{array}\right]$. First, we perform a similar analysis as above and compute the norms ant their estimates. In Figure~\ref{fig:resp_sdd2}(a), we plot the singular values of the systems and the bounds  $\delta_{\rm bd}^1$,  $\delta_{\rm bd}^2$, which are the solutions to~\eqref{prog:bd_hinf} for systems $G_1$ and $G_2$. Our main observation is that a flip of a sign drastically changes the magnitude of the frequency response of the systems. Yet the bounds $\delta_{\rm bd}$ are still surprisingly close to the actual $\Hinf$ norm.

Next, we evaluate the flow bounds provided by Theorem~\ref{thm:comp-prin}. We compute the initial condition responses of the systems $G_1$ and $G_2$ and their comparison system to the initial conditions $x_{01} = \begin{bmatrix} -1 & 0 & 0 & 0 & 0 \end{bmatrix}$ and $x_{02} = \begin{bmatrix} -1 & 1 & -1 & 1 & -1 \end{bmatrix}$. As shown in Figure~\ref{fig:resp_sdd2}(b), the initial condition responses (with $u=0$) can be conservative also for the system $G_1$, especially if the initial state does not belong to the orthants $\Rnn^n$ or $-\Rnn^n$, where the bound is tightest.

\end{document}